

\documentclass[final,5p,times,twocolumn]{elsarticle}


\usepackage{amssymb}
\usepackage{amsthm}
\usepackage{graphicx}
\usepackage[cmex10]{amsmath}
\usepackage{array}
\usepackage{mdwmath}
\usepackage{mdwtab}
\usepackage{fixltx2e}
\usepackage{graphics} 
\usepackage{epsfig} 
\usepackage{times} 

\usepackage{url}
\usepackage{lscape}
\usepackage{color}
\usepackage{epsfig}
\usepackage{amsthm} 
\newtheorem{theorem}{Theorem}

\theoremstyle{definition}

\newtheorem{remark}{Remark}
\usepackage{hyperref}
\newtheorem{assumption}{Assumption}

\newtheorem{problem}{Problem} 


\begin{document}

\begin{frontmatter}


\title{Practical Considerations for Implementing \\ Robust-to-Early Termination Model Predictive Control\tnoteref{label1}}
\tnotetext[label1]{This research has been supported by National Science Foundation under award number DGE-2244082, and by WSU Voiland College of Engineering and Architecture through a start-up package to M. Hosseinzadeh.}

\author{Mohsen Amiri}
\ead{mohsen.amiri@wsu.edu}

\author{Mehdi Hosseinzadeh\corref{cor}}
\ead{mehdi.hosseinzadeh@wsu.edu}

\address{School of Mechanical and Materials Engineering, Washington State University, Pullman, WA 99164, USA}

\cortext[cor]{Corresponding author.}


\begin{abstract}
Model Predictive Control (MPC) is widely used to achieve performance objectives, while enforcing operational and safety constraints. Despite its high performance, MPC often demands significant computational resources, making it challenging to implement in systems with limited computing capacity. A recent approach to address this challenge is to use the Robust-to-Early Termination (REAP) strategy. At any time instant, REAP converts the MPC problem into the evolution of a virtual dynamical system whose trajectory converges to the optimal solution, and provides guaranteed sub-optimal and feasible solution whenever its evolution is terminated due to limited computational power. REAP has been introduced as a continuous-time scheme and its theoretical properties have been derived under the assumption that it performs all the computations in continuous time. However, REAP should be practically implemented in discrete-time. This paper focuses on the discrete-time implementation of REAP, exploring conditions under which anytime feasibility and convergence properties are maintained when the computations are performed in discrete time. The proposed methodology is validated and evaluated through extensive simulation and experimental studies.
\end{abstract}


\begin{keyword}
Model Predictive Control\sep Robust-to-Early Termination\sep Discrete-Time Implementation \sep Limited Computing Capacity.


\end{keyword}

\end{frontmatter}

\section{Introduction}\label{sec:Introduction}

Designing control laws that can be implemented in real-world systems, particularly those subject to constraints, has become a significant challenge in recent decades. Constrained control is primarily addressed by Model Predictive Control (MPC) \cite{rawlings2017model, camacho2007model}, which optimizes performance objectives by solving an optimization problem at every sampling instant within a receding horizon, while ensuring all operational and safety constraints are satisfied. However, the computational demands arising from online optimization and diverse state and input constraints pose significant challenges, in particular, in systems with fast dynamics and/or limited computational resources.

Different approaches have been introduced to overcome the computational challenge of MPC implementation. One approach is to pre-compute optimal laws offline and store them for online use, as exploited in explicit MPC  \cite{alessio2009survey,kvasnica2019complexity,xiu2020grid}; however, this approach requires high memory for large problems and is not robust to early termination of the searching process. Another approach involves triggering mechanisms, used in self-triggered \cite{henriksson2012self,long2022safety,lu2020self} and event-triggered MPC \cite{mi2018event,wang2020event,heydari2022robust}; yet, limited computational power can prevent online optimization from converging when triggered. Another approach is to perform a fixed number of optimization iterations \cite{cimini2017exact,ghaemi2009integrated,liao2020time}; in general, this approach does not ensure constraint satisfaction at all times. Anytime MPC \cite{feller2017stabilizing,feller2018sparsity,gharbi2021anytime} converts the MPC problem into an unconstrained optimization problem using logarithmic barrier functions, ensuring stability with minimal iterations; however, in general, it does not guarantee constraint satisfaction. Another approach introduced in \cite{SSMPC,Limon2008,Ferramosca2009} modifies the MPC formulation such that one can arbitrarily decrease the length of the prediction horizon (and consequently reduce the size of the problem) without violating closed-loop stability and recursive feasibility; note that this approach cannot ensure constraint satisfaction in the event of early termination of the corresponding optimization problem. Another approach is dynamically embedded MPC \cite{nicotra2018dynamically}, which embeds the solution of the MPC problem into the internal state vector of a dynamical feedback law; note that, in general, dynamically embedded MPC does not guarantee constraint satisfaction at all times. Dynamically embedded MPC has been enhanced in \cite{nicotra2018embedding} by augmenting it with the Explicit Reference Governor (ERG) \cite{hosseinzadeh2019explicit,hosseinzadeh2019constrained} to guarantee constraint satisfaction; however, the solution is usually conservative due to the natural conservatism of ERG.

The most recent and promising approach to address the limitations of computational resources for MPC implementation is to utilize the Robust-to-Early terminAtion oPtimization (REAP) which has been introduced in \cite{Hosseinzadeh2023RobustTermination}. REAP exploits the primal-dual gradient flow \cite{BoydBook,HosseinzadehTAC2020,Hosseinzadeh2022_ROTEC} to embed the optimal solution of the MPC problem into the internal states of a virtual continuous-time dynamical system that runs in parallel to the process. At any time instant, REAP runs until available time for execution runs out and the solution is guaranteed to be sub-optimal and feasible despite early termination. The main advantage of REAP over the above-mentioned efforts is that it provides a feasible solution even when the time available for online computation of the MPC solution at any time instant is limited and varies over time.




One of the main obstacles to a widespread use of REAP is that it has been introduced as a continuous-time scheme and its theoretical properties have been derived under the assumption that all computations are performed in continuous time. However, REAP should be practically implemented in discrete time, and its not clear if its properties are maintained when computations are performed in discrete time. This paper aims at addressing the discrete-time implementation of REAP. The core idea is to adaptively adjust the evolution rate of the gradient flow dynamics such that REAP's properties are maintained when REAP computes the updates of the decision variables in discrete time. Theoretical properties of such a discrete-time implementation are proven analytically, and extensive simulation and experimental studies are provided to support our analyses.  Moreover, a MATLAB package has been developed to allow researchers to utilize REAP in their research to address limitations on computational resources for MPC implementations.


The rest of this paper is organized as follows. Section \ref{sec:ProblemStatement} provides preliminaries and states the problem. Section \ref{sec:Discrete-Time} develops the discrete-time REAP and proves its theoretical properties. Section \ref{sec:Results} assesses the performance of the proposed methodology through extensive simulation and experimental studies on a Parrot Bebop 2 drone. Finally, Section \ref{sec:Conclusion} concludes the paper.

\medskip
\noindent\textbf{Notation:} We use $\mathbb{R}$ to denote the set of real numbers, $\mathbb{R}_{>0}$ to denote the set of positive real numbers, and $\mathbb{R}_{\geq0}$ to denote the set of non-negative real numbers. We use $\mathbb{Z}_{>0}$ denotes the set of positive integer numbers. We use $x\in\mathbb{R}^n$ to indicate that $x$ belongs to $n$-dimensional real space. Given $x\in\mathbb{R}^n$, we denote its transpose by $x^\top$. Also, $\left\Vert x\right\Vert_Q^2=x^\top Qx$, where $Q\in\mathbb{R}^{n\times n}$. We use $\ast$ in the superscript to indicate optimal decision. We use $\text{diag}\{a_1,a_2,\cdots,a_n\}$ to denote a $n\times n$ matrix with diagonal entries $a_1,a_2,\cdots,a_n\in\mathbb{R}$. We use $I_n$ to denote $n\times n$ identity matrix. 

\section{Preliminaries and Problem statement}\label{sec:ProblemStatement}

Consider the following discrete-time linear time-invariant system:
\begin{subequations}\label{eq:System1}
\begin{align}
x(t+1)=&A x(t)+B u(t),\\
y(t)=&C x(t)+D u(t),
\end{align}    
\end{subequations}
where $ x(t) = [x_1(t) \; \ldots \; x_n(t)]^\top \in \mathbb{R}^n $ is the state vector at time instant $ t $, $ u(t) = [u_1(t) \; \ldots \; u_p(t)]^\top \in \mathbb{R}^p $ is the control input at time instant $ t $, $ y(t) = [y_1(t) \; \ldots \; y_m(t)]^\top \in \mathbb{R}^m $ is the output vector at time instant $ t $, and $ A \in \mathbb{R}^{n \times n}, B \in \mathbb{R}^{n \times p}, C \in \mathbb{R}^{m \times n} $, and $ D \in \mathbb{R}^{m \times p} $ are system matrices.
\begin{assumption}\label{Assum:Controllability}
The pair $(A, B)$ is stabilizable and the pair $(C, A)$ is detectable.
\end{assumption}

Suppose that states and inputs of system \eqref{eq:System1} are constrained at all times as follows:
\begin{subequations}\label{eq:constraints}
\begin{align}
x(t)&\in\mathcal{X}\subset\mathbb{R}^n,\label{eq:constraints1}\\
u(t)&\in\mathcal{U}\subset\mathbb{R}^p,\label{eq:constraints2}
\end{align}   
\end{subequations}
where $ \mathcal{X} $ and $ \mathcal{U} $ are convex polytopes defined as:
\begin{subequations} \label{eq:allConstraints}
\begin{align}
& \mathcal{X} = \left\{ x \in \mathbb{R}^n : a_i^\top x + b_i \leq 0, \; i = 1, \ldots, c_x \right\},\label{eq:Xconstraints} \\
& \mathcal{U} = \left\{ u \in \mathbb{R}^p : c_i^\top u + d_i \leq 0, \; i = 1, \ldots, c_u \right\},\label{eq:Uconstraints} 
\end{align}
\end{subequations}
with $ a_i \in \mathbb{R}^n$, $b_i \in \mathbb{R}$, $c_i \in \mathbb{R}^p$, $d_i \in \mathbb{R} $, $ c_x $ being the number of constraints on the state vector, and $ c_u $ being the number of constraints on the control input.


Let $ r \in \mathbb{R}^m $ be the desired reference, and suppose that the rank of the matrix $\left[\begin{matrix}A-I_n & B\\C & D\end{matrix}\right]$ equals the rank of the augmented matrix $\left[\begin{matrix}A-I_n & B & \mathbf{0}\\C & D & r\end{matrix}\right]$. Thus, there exist \cite{StrangBook} at least one steady-state configuration $(\bar{x_r},\bar{u_r})$ satisfying
\begin{subequations}\label{eq:SSconfiguration}
 \begin{align}
\bar{x}_r &= A \bar{x}_r + B \bar{u}_r,\\
r &= C \bar{x}_r + D \bar{u}_r.
\end{align}   
\end{subequations}

If $ \bar{x}_r \in (1-\epsilon)\mathcal{X} $ and $ \bar{u}_r \in (1-\epsilon)\mathcal{U}$ for some $\epsilon\in(0,1)$,  the reference signal $r$ is called a strictly steady-state admissible reference. We denote the set of all strictly steady-state admissible references by $\mathcal{R}\subset\mathbb{R}^m$.

For the given $r\in\mathcal{R}$, we define the Region of Attraction (RoA) as the set of all initial conditions $x(0)$ within $\mathcal{X}$ that allow the steady-state configuration $(\bar{x}_r,\bar{u}_r)$ to be achieved without violating the constraints given in \eqref{eq:constraints}. We use $\text{RoA}_r$ to denote the RoA corresponding to the steady-state admissible reference $r\in\mathcal{R}$. It is obvious that $\text{RoA}_r\subset\mathcal{X}$.

\subsection{MPC Formulation}
Given $ r \in \mathcal{R} $ as the desired reference, $x(0)\in\text{RoA}_r$ as the initial condition, and $ N \in \mathbb{Z}_{>0} $ as the length of the prediction horizon, MPC computes the optimal control sequence over the prediction horizon $\mathbf{u}^\ast(t) := \left[ \left(u^\ast(0|t)\right)^{\top}, \ldots, \left(u^\ast(N-1|t)\right)^{\top} \right]^{\top} \in \mathbb{R}^{Np} $ by solving the following optimization problem:
\begin{subequations}\label{eq:OptimizationProblemExtend}
\begin{align}
\arg\,\min _{\mathbf{u}} & \sum_{k=0}^{N-1}\left\|\hat{x}(k|t)-\bar{x}_r\right\|_{Q_x}^2+\sum_{k=0}^{N-1}\left\|u(k|t)-\bar{u}_r\right\|_{Q_u}^2 \nonumber\\
& +\left\|\hat{x}(N|t)-\bar{x}_r\right\|_{Q_N}^2,\label{eq:CostFunctionExtend}
\end{align}
subject to
\begin{align}
& \hat{x}(k+1|t)=A \hat{x}(k|t)+B u(k),~\hat{x}(0|t)=x(t), \label{eq:ConstraintRevised}\\
& \hat{x}(k|t) \in \mathcal{X}, k \in\{0, \cdots, {\color{red}~~~~~~~~~N}\},\\
& u(k|t) \in \mathcal{U}, k \in\{0,\cdots, N-1\}, \\
& (\hat{x}(N|t), r) \in \Omega,\label{eq:ConstraintTerminalExtend}
\end{align}
\end{subequations}
where $k$ indicates the time instant along the prediction horizon, $Q_x=Q_x^{\top} \succeq 0\left(Q_x \in \mathbb{R}^{n \times n}\right), Q_u=Q_u^{\top} \succ 0\left(Q_u \in \mathbb{R}^{p \times p}\right)$, $Q_N\succ0$ ($Q_N \in$ $\mathbb{R}^{n \times n}$) is the terminal cost weight (see Subsection \ref{sec:TerminalCostMatrix}), and $\Omega$ is the terminal constraint set (see Subsection \ref{sec:TerminalConstraintSet}).

\subsubsection{Terminal Cost Weight $Q_N$}\label{sec:TerminalCostMatrix}
In MPC formulation, it is convenient \cite{Hosseinzadeh2023RobustTermination,nicotra2018dynamically,nicotra2018embedding} to choose $Q_N$ in \eqref{eq:CostFunctionExtend} as the solution of the following algebraic Riccati equation: $Q_N = A^{\top} Q_N A - \left(A^{\top} Q_N B\right) \left(Q_u + B^{\top} Q_N B\right)^{-1} \left(B^{\top} Q_N A\right) + Q_x$.

\subsubsection{Terminal Constraint Set $\Omega$}\label{sec:TerminalConstraintSet}
Consider the terminal control law $\kappa(x(t), r) = \bar{u}_r + K(x(t) - \bar{x}_r)$, where $K$ is such that $A + BK$ is Schur. We select the feedback gain matrix $K$ in the terminal control law as $K = -\left(Q_u + B^{\top} Q_N B\right)^{-1} \left(B^{\top} Q_N A\right)$. Note that such a selection is optimal for the unconstrained problem \cite{nicotra2018dynamically,Hosseinzadeh2023RobustTermination}.




As discussed in \cite{SSMPC}, terminal constraint given in \eqref{eq:ConstraintTerminalExtend} can be implemented by the following constraints:
\begin{subequations}\label{eq:TerminalConstraintsAll}
\begin{align}
& (A + BK)^\omega \hat{x}(N|t) + \sum_{j=1}^\omega (A + BK)^{j-1} \left(B\bar{u}_r - BK\bar{x}_r\right) \in \mathcal{X}, \\
& K \hat{x}(N|k) - K \bar{x}_r + \bar{u}_r \in \mathcal{U},\\
& K(A + BK)^\omega \hat{x}(N|t) + K\sum_{j=1}^\omega (A + BK)^{j-1} \left(B\bar{u}_r - BK\bar{x}_r\right) \nonumber\\
&~~~~~~~~~~~~~~~~~~~~~~~~~~~~~~~~+\left(B\bar{u}_r - BK\bar{x}_r\right) \in \mathcal{U},
\end{align}
\end{subequations}
where $\omega \in \{1, \ldots, \omega^\ast\}$ with $\omega^\ast$ being a finite index that can be computed by solving a series of offline mathematical programming problems as detailed in \cite{Gilbert1991}.

\subsection{Final MPC Problem}
It is obvious that constraint set $\mathcal{X}$ given in \eqref{eq:Xconstraints} and terminal constraints given in \eqref{eq:TerminalConstraintsAll} can be framed as constraints on the control sequence $\mathbf{u}$. Thus, the MPC problem can be expressed as:
\begin{subequations}\label{eq:OptimizationProblemMain}
\begin{align}
\arg\,\min _{\mathbf{u}} & \sum_{k=0}^{N-1}\left\|\hat{x}(k|t)-\bar{x}_r\right\|_{Q_x}^2+\sum_{k=0}^{N-1}\left\|u(k|t)-\bar{u}_r\right\|_{Q_u}^2 \nonumber\\
& +\left\|\hat{x}(N|t)-\bar{x}_r\right\|_{Q_N}^2,\label{eq:CostFunction}
\end{align}
subject to constraint \eqref{eq:ConstraintRevised} and
\begin{align}
\mathbf{u}\in\mathcal{\mathbf{U}} = \left\{\mathbf{u} :  \eta_i^{\top} \mathbf{u} + \gamma_i  \leq 0, \, i = 1, \ldots, \bar{c} \right\},\label{eq:ConstraintTerminal}
\end{align}
\end{subequations}
where $\mathcal{\mathbf{U}}\subset\mathcal{U}\subset\mathbb{R}^p$, $\mathbf{u}:= \left[ \left(u(0|t)\right)^{\top}, \ldots, \left(u(N-1|t)\right)^{\top} \right]^{\top} \in \mathbb{R}^{Np} $, and $\bar{c}$ indicates the total number of constraints (i.e., $\bar{c}=c_x+c_u+c_\Omega$), with $ c_\Omega $ being the number of constraints in \eqref{eq:TerminalConstraintsAll}.

\subsection{Robust-to-Early Termination MPC}
Let $\mathcal{\mathbf{U}}_\beta$ be a tightened version of the set $\mathbf{U}$, defined as:
\begin{align}\label{eq:tightenedConstraints}
    \mathcal{\mathbf{U}}_\beta = \left\{\mathbf{u}:\eta_i^{\top}\mathbf{u}+ \gamma_i + \frac{1}{\beta} \leq 0, \, i = 1, \ldots, \bar{c} \right\},
\end{align}
where $\beta > 0$. Note that, for any given reference $r$, tightening the set $\mathcal{\mathbf{U}}$ by $\beta$ to construct the set $\mathcal{\mathbf{U}}_\beta$ as in \eqref{eq:tightenedConstraints} can reduce the region of attraction. However, for any reference $r\in\mathcal{R}$ and the initial condition $x(0)\in\text{RoA}_r$ for which the optimization problem \eqref{eq:OptimizationProblemMain} is feasible and $\mathbf{u}(t)$ remains within the interior of the set $\mathcal{\mathbf{U}}$ for all $t$, one can select a sufficiently large $\beta$ to ensure that the optimization problem with the constraint $\mathbf{u}\in\mathcal{\mathbf{U}}_\beta$ is also feasible.


Let $\mathbf{u}^{\dag}(t)$ represent the solution of the optimization problem \eqref{eq:OptimizationProblemMain} with the tightened constraint set $\mathcal{\mathbf{U}}_\beta$. Notably, $\mathbf{u}^{\dag}(t)\rightarrow \mathbf{u}^\ast(t)$ as $\beta\rightarrow\infty$.

The modified barrier function \cite{Polyak1992,Melman1996,Vassiliadis1998} associated with the optimization problem \eqref{eq:OptimizationProblemMain} with the tightened constraint set given in \eqref{eq:tightenedConstraints} is:
\begin{align}\label{eq:BarrierFunction}
&\mathcal{B}\big(x(t),r,\mathbf{u},\lambda\big)=\sum_{k=0}^{N-1} \left\|\hat{x}(k|t) - \bar{x}_r \right\|_{Q_x}^2 + \sum_{k=0}^{N-1} \left\|u(k|t) - \bar{u}_r \right\|_{Q_u}^2 \nonumber\\
&+ \left\|\hat{x}(N|t) - \bar{x}_r \right\|_{Q_N}^2 - \sum_{i=1}^{\bar{c}} \lambda_{i} \log \Big(-\beta \big(\eta_i^{\top} \mathbf{u} + \gamma_i  + 1/\beta\big) + 1 \Big),
\end{align}
where $\lambda = \left[\lambda_{1}, \cdots, \lambda_{\bar{c}}\right]^{\top} \in \mathbb{R}_{\geq 0}^{\bar{c}}$ 
is the vector of dual variables. At any time instant $t$, we denote the vector of the optimal dual variables by $\lambda^{\dag}(t)$.

At any time instant $t$, REAP uses the primal-dual gradient flow \cite{Feijer2010,HosseinzadehTAC,HosseinzadehLetter,LucaLetter,HosseinzadehTAC2024} to inform the following virtual continuous-time dynamical system:
\begin{subequations}\label{eq:DualGradientFlow}
\begin{align}
\frac{d}{d s} \hat{\mathbf{u}}_c(s|t) &= -\sigma \nabla_{\hat{\mathbf{u}}_c} \mathcal{B}\big(x(t), r, \hat{\mathbf{u}}_c(s|t), \hat{\lambda}_c(s|t)\big), \\
\frac{d}{d s} \hat{\lambda}_c(s|t) &= +\sigma \Big(\nabla_{\hat{\lambda}_c} \mathcal{B}\big(x(t), r, \hat{\mathbf{u}}_c(s|t), \hat{\lambda}_c(s|t)\big) + \Phi(s|t) \Big),
\end{align}
\end{subequations}
where the subscript ``c" indicates that the signal is continuous-time, $\sigma \in \mathbb{R}_{>0}$ is called the Karush-Kuhn-Tucker (KKT) parameter that determines the evolution rate of system \eqref{eq:DualGradientFlow}, $s$ is the auxiliary time variable that REAP spends solving the MPC problem, and $\Phi(s) \in \mathbb{R}^{\bar{c}}$  is the projection operator whose $i$th entry, denoted by $[\Phi(s|t)]_i$, is:
\begin{align}\label{eq:PhiDefinition}
[\Phi(s|t)]_i = \left\{\begin{array}{rl}
0, & \text{if } \hat{\lambda}_{c_i}(s|t) > 0 \text{ or } \big(\hat{\lambda}_{c_i}(s|t) = 0 \\
& \text{ and } \left[\nabla_{\hat{\lambda}_c} \mathcal{B}(\cdot) \right]_i \geq 0 \big), \\
-\left[\nabla_{\hat{\lambda}_c} \mathcal{B}(\cdot) \right]_i, & \text{otherwise},
    \end{array}\right.,
\end{align}
with $\hat{\lambda}_{c_i}(s|t)$ and $\left[\nabla_{\hat{\lambda}_c} \mathcal{B}(\cdot) \right]_i$ representing the $i$th entries of $\hat{\lambda}_c(s|t)$ and $\mathcal{B}\big(x(t), r, \hat{\mathbf{u}}_c(s|t), \hat{\lambda}_c(s|t)\big)$, respectively. The differential equations given in \eqref{eq:DualGradientFlow} constitute a virtual continuous-time dynamical system, which should be deployed at any time $t$.

Starting from a feasible initial condition $\left(\hat{\mathbf{u}}_c(0|t),\hat{\lambda}_c(0|t)\right)$,  REAP provides the following properties at any time instant $t$: i) as $s\rightarrow\infty$, $\left(\hat{\mathbf{u}}_c(s|t),\hat{\lambda}_c(s|t)\right)$ converges to $\left(\mathbf{u}^\dag(t),\lambda^\dag(t)\right)$ exponentially fast and at a tunable convergence rate; and ii) $\hat{\mathbf{u}}_c(s|t)\in\mathbf{U}_\beta$ for all $s$, where $\mathbf{U}_\beta$ is as in \eqref{eq:tightenedConstraints}. The significance of the above-mentioned second property (i.e., anytime feasibility) is that the evolution of system \eqref{eq:DualGradientFlow} can be stopped at any moment with a guaranteed feasible solution. Thus, REAP runs until available time for execution runs out (that may not be known in advance) and the solution is guaranteed to be sub-optimal and enforce the constraints despite any early termination.

\begin{remark}\label{Remark:InitialFeasibility}
A feasible $\hat{\textbf{u}}_c(0|t)$ control sequence is always available, and can be constructed based on the control sequence computed at time instant $t-1$, which is shifted by one and padded with the terminal control law. As discussed in \cite{Hosseinzadeh2023RobustTermination}, such a selection is effective, as states of the system do not change substantially from one time instant to the next in most applications. At time instant $t=0$, since MPC given in \eqref{eq:OptimizationProblemMain} adapts to changes in the desired reference, the feasibility of $\hat{\mathbf{u}}_c(0|0)$ can be ensured by using the  feasibility governor detailed in \cite{nicotra2018embedding}. Regarding $\hat{\lambda}_c(0|t)$, note that any non-negative value for $\hat{\lambda}_{c_i}(0|t),~i\in\{1,\cdots,\bar{c}\}$ is feasible.
\end{remark}

\subsection{Goal}
REAP has the remarkable feature of providing a sub-optimal and feasible solution whenever its evolution is terminated. These properties are guaranteed when REAP performs the computations in continuous time. However, REAP should be practically implemented in discrete-time. Note that although using an exact discretization method should preserve these properties when the auxiliary system \eqref{eq:DualGradientFlow} is implemented in discrete time, since the auxiliary system \eqref{eq:DualGradientFlow} is nonlinear (as $\nabla_{\hat{\lambda}_c}\mathcal{B}(\cdot)$ and $\nabla_{\hat{\mathbf{u}}_c}\mathcal{B}(\cdot)$ are nonlinear), exact discretization can be quite challenging and may increase computational demands.

Prior work \cite{Hosseinzadeh2023RobustTermination} suggests implementing REAP by using the Euler method with a sufficiently small discretization step, without discussing how to analytically obtain an appropriate discretization step. Note that using an excessively small discretization step decreases the evolution rate of the virtual system \eqref{eq:DualGradientFlow} such that it may not converge in the available time. Since REAP uses an acceptance/rejection mechanism that applies the backup solution consisting of ``shifted by 1 and added by the terminal control law'' control sequence from the previous step if system \eqref{eq:DualGradientFlow} does not converge, using a very small discretization step can degrade feedback and robustness to disturbances and measurement noise; it can also degrade the performance of the closed-loop system by incorporating the non-optimal terminal control law into the considered control sequence. Therefore, it is essential to examine the theoretical guarantees for the discrete-time implementation of REAP and ensure its convergence and anytime feasibility properties. This paper aims at achieving this objective through addressing the following problem.


\begin{problem}\label{Problem}
Given $r\in\mathcal{R}$, suppose that REAP give in \eqref{eq:DualGradientFlow} is implemented in discrete time by using the Euler method, as follows
\begin{subequations}\label{eq:EulerMethod}
\begin{align}
\hat{\mathbf{u}}(\tau|t) =&\hat{\mathbf{u}}(\tau-1|t)-\sigma\cdot d\tau\cdot \nabla_{\hat{\mathbf{u}}} \mathcal{B}\big(x(t), r, \hat{\mathbf{u}}(\tau-1|t), \hat{\lambda}(\tau-1|t)\big), 
\\
\hat{\lambda}(\tau|t) = & \hat{\lambda}(\tau-1|t)+\sigma\cdot d\tau\cdot  \Big(\nabla_{\hat{\lambda}} \mathcal{B}\big(x(t), r, \hat{\mathbf{u}}(\tau-1|t), \hat{\lambda}(\tau-1|t)\big) \nonumber\\
&+ \Phi(\tau-1|t) \Big),
\end{align}
\end{subequations}
where $\tau$ indicates the computation step,  and $d\tau$ is the discretization step. Provide conditions under which the discrete-time implementation as in \eqref{eq:EulerMethod} maintains REAP's convergence and anytime feasibility properties. 
 \end{problem}


\section{Discrete-Time Implementation of REAP}\label{sec:Discrete-Time}
Suppose that between MPC time instants (i.e., during the time interval $[t,t+1)$) during which the state $x(t)$ and the desired reference $r$ are constant, we use REAP to obtain the optimal solution $\mathbf{u}^\ast(t)$. When REAP is implemented in continuous time, its anytime feasibility property has been developed based upon the fact that as the trajectories of the system \eqref{eq:DualGradientFlow} approach the boundary of the following sets that are constant during the time interval $[t,t+1)$:
\begin{subequations}\label{eq:ssconstraint}
\begin{align}
\mathcal{D}_{\hat{\mathbf{u}}}=&\left\{\hat{\mathbf{u}} \big| \eta_i^{\top} \hat{\mathbf{u}} + \gamma_i \leq 0, i=1, \cdots, \bar{c}\right\},\\
\mathcal{D}_{\hat{\lambda}}=&\left\{\hat{\lambda} \big| \hat{\lambda}_i\geq 0, i=1, \cdots, \bar{c}\right\},
\end{align}
\end{subequations}
there will be a sufficient repulsion pushing the trajectories towards the interior of these sets. Thus, the sets $\mathcal{D}_{\hat{\mathbf{u}}}$ and $\mathcal{D}_{\hat{\lambda}}$ remain invariant at any time instant $t$ during REAP's execution. However, when updates of the primal and dual variables are computed in discrete time as in \eqref{eq:EulerMethod}, discretization errors can compromise the invariance of the set $\mathcal{D}_{\hat{\mathbf{u}}}$ and/or $\mathcal{D}_{\hat{\lambda}}$. This issue is illustrated geometrically in Figure \ref{fig:constraint violation} for the primal variable $\hat{\mathbf{u}}$. One potential mitigation strategy involves using a small $\beta$. However, this approach is not desired, as a small $\beta$ will impact optimality by increasing the difference between $\mathbf{u}^\ast(t)$ and $\mathbf{u}^\dag(t)$, and will degrade the performance by classifying safe control sequences as unsafe ones.

According to \eqref{eq:EulerMethod}, another strategy to avoid violations due to discretization errors is to limit the changes in the primal and dual variables, though this can reduce the evolution rate of system \eqref{eq:DualGradientFlow}. 
Limiting $\|\hat{\mathbf{u}}(\tau|t) - \hat{\mathbf{u}}(\tau-1|t)\|$ and $\|\hat{\lambda}(\tau|t) - \hat{\lambda}(\tau-1|t)\|$ can be achieved through either a saturation function (similar to the one reported in \cite{hosseinzadeh2019explicit,hosseinzadeh2019constrained}) or by selecting $d\tau$ and $\sigma$ such that $d\tau \cdot \sigma$ is sufficiently small. The use of a saturation function is not practical, as: i) determining a safe saturation level with minimal performance degradation is not straightforward; ii) a safe saturation level for one scenario may be unsafe for another; and iii) using a saturation function can unnecessarily degrade performance when constraints are far from being violated.

The second approach (making $d\tau \cdot \sigma$ sufficiently small) is also challenging, as: i) there is no analytical way to determine values of $\sigma$ and $d\tau$ necessary to avoid violations due to discretization errors (note that values that are safe for one scenario might be unsafe for another); and ii) when constraints are far from being violated, and thus
large changes in the primal and dual parameters are allowed, reducing $d\tau \cdot \sigma$ can unnecessarily prevent these changes, and consequently, degrade the performance.

\begin{figure}[t]
    \centering
    \includegraphics[width=.65\linewidth]{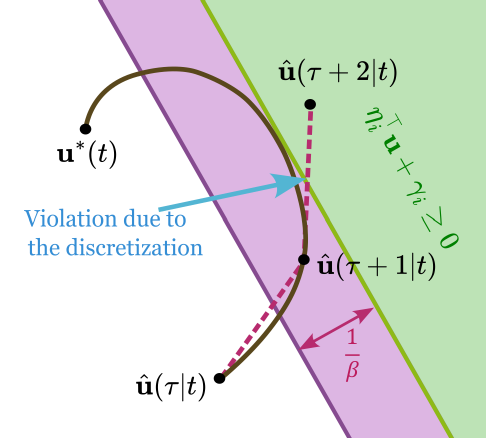}
    \caption{Comparison of the Continuous and discrete implementation of REAP regarding constraint violation.}
    \label{fig:constraint violation}
\end{figure}

Instead of using a constant $\sigma$ and $d\tau$ (and consequently a constant $d\tau\cdot \sigma$), this paper proposes to fix the value of $d\tau$ (i.e., to use a constant discretization step, which is relevant) and use a dynamic KKT parameter (i.e., $\sigma$) where the value of $\sigma$ at any computation step is selected so as to prevent constraint violation due to discretization errors. More precisely, we propose the following discrete-time implementation:
\begin{subequations}\label{eq:DisREAP}
\begin{align}
\hat{\mathbf{u}}(\tau|t) =&\hat{\mathbf{u}}(\tau-1|t)-\sigma(\tau|t)\cdot d\tau\cdot \nabla_{\hat{\mathbf{u}}} \mathcal{B}\big(x(t), r, \hat{\mathbf{u}}(\tau-1|t),\nonumber\\
&\hat{\lambda}(\tau-1|t)\big), \label{eq:DisREAP1}
\\
\hat{\lambda}(\tau|t) = & \hat{\lambda}(\tau-1|t)+\sigma(\tau|t)\cdot d\tau\cdot  \Big(\nabla_{\hat{\lambda}} \mathcal{B}\big(x(t), r, \hat{\mathbf{u}}(\tau-1|t),\nonumber\\
&\hat{\lambda}(\tau-1|t)\big)+ \Phi(\tau-1|t) \Big).\label{eq:DisREAP2}
\end{align}
\end{subequations}

\begin{figure*}\setcounter{equation}{15}
 \begin{subequations}\label{eq:Sigmas}
\begin{align}
\sigma_i^{\hat{\mathbf{u}}}(\tau|t)=&\frac{\delta_i^{\hat{\mathbf{u}}}(\tau-1|t)}{d\tau\cdot\left\Vert\eta_i\right\Vert\cdot\max \left\{\left\Vert\nabla_{\hat{\mathbf{u}}} \mathcal{B}\big(x(t), r, \hat{\mathbf{u}}(\tau-1|t), \hat{\lambda}(\tau-1|t)\big)\right\Vert, \psi\right\}},\\
\sigma_i^{\hat{\lambda}}(\tau|t)=&\frac{\delta_i^{\hat{\lambda}}(\tau-1|t)}{d\tau\cdot\max \left\{\left\vert\left[\nabla_{\hat{\lambda}} \mathcal{B}\big(x(t), r, \hat{\mathbf{u}}(\tau-1|t), \hat{\lambda}(\tau-1|t)\big)\right]_i+\left[\Phi(\tau-1|t)\right]_i\right\vert, \psi\right\}}.
\end{align}
\end{subequations}   
\hrule
\end{figure*}

The following theorem provides a condition on the KKT parameter $\sigma(\tau|t)$ under which the invariance of the sets $\mathcal{D}_{\hat{\mathbf{u}}}$ and $\mathcal{D}_{\hat{\lambda}}$ given in \eqref{eq:ssconstraint} is guaranteed during the REAP's execution, when the iterates are computed as in \eqref{eq:DisREAP}.


\begin{theorem}\label{Theorem1}
Let $\hat{\mathbf{u}}(\tau-1|t)\in\mathcal{D}_{\hat{\mathbf{u}}}$ and $\hat{\lambda}(\tau-1|t)\in\mathcal{D}_{\hat{\lambda}}$. Let the KKT parameter $\sigma(\tau|t)$ be selected such that the following condition is satisfied:\setcounter{equation}{14}
\begin{align}\label{eq:SigmaCondition}
0\leq\sigma(\tau|t)\leq\min\left\{\min_{i\in\{1,\cdots,\bar{c}\}}\left\{\sigma_i^{\hat{\mathbf{u}}}(\tau|t)\right\},\min_{i\in\{1,\cdots,\bar{c}\}}\left\{\sigma_i^{\hat{\lambda}}(\tau|t)\right\}\right\},
\end{align}
where $\sigma_i^{\hat{\mathbf{u}}}(\tau|t)$ and $\sigma_i^{\hat{\lambda}}(\tau|t)$ are as in \eqref{eq:Sigmas}, $\big[\nabla_{\hat{\lambda}} \mathcal{B}\big(x(t), r,\hat{\mathbf{u}}(\tau-1|t),\hat{\lambda}(\tau-1|t)\big)\big]_i$ and $\big[\Phi(\tau-1|t)\big]_i$ are respectively the $i$th entry of $\nabla_{\hat{\lambda}} \mathcal{B}\big(x(t), r,\hat{\mathbf{u}}(\tau-1|t),\hat{\lambda}(\tau-1|t)\big)$ and $\Phi(\tau-1|t)$, $\psi\in\mathbb{R}_{>0}$ is a smoothing factor to prevent numerical issues, $\delta_i^{\hat{\mathbf{u}}}$ is the Euclidean distance between $\hat{\mathbf{u}}(\tau-1|t)$ and the $i$-th constraint computed as:\setcounter{equation}{16}
\begin{align}\label{eq:distance}
\delta_i^{\hat{\mathbf{u}}}(\tau-1|t)=& \left\{ 
\begin{array}{ll}
& \min\limits_{\theta} \left\|\hat{\mathbf{u}}(\tau-1|t) - \theta\right\| \\
\text{s.t.} & \eta_i^{\top} \theta + \gamma_i+\epsilon=0
\end{array}
\right.,
\end{align}
and $\delta_i^{\hat{\lambda}}(\tau-1|t)=\hat{\lambda}_i(\tau-1|t)-\epsilon$, with $\epsilon$ being a small positive constant (see Remark \ref{remark:Epsilon}). Then, $\hat{\mathbf{u}}(\tau|t) \in \mathcal{D}_{\hat{\mathbf{u}}}$ and $\hat{\lambda}(\tau|t) \in \mathcal{D}_{\hat{\lambda}}$.
\end{theorem}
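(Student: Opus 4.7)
The plan is to prove the two containments $\hat{\mathbf{u}}(\tau|t)\in\mathcal{D}_{\hat{\mathbf{u}}}$ and $\hat{\lambda}(\tau|t)\in\mathcal{D}_{\hat{\lambda}}$ separately. Both sets are polytopes described by finitely many scalar inequalities, so it suffices to argue one constraint index $i\in\{1,\ldots,\bar{c}\}$ at a time.

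For the primal containment, I would substitute the update \eqref{eq:DisREAP1} into the left-hand side of the $i$th defining inequality of $\mathcal{D}_{\hat{\mathbf{u}}}$ to obtain
\begin{align*}
\eta_i^{\top}\hat{\mathbf{u}}(\tau|t)+\gamma_i
=\bigl(\eta_i^{\top}\hat{\mathbf{u}}(\tau-1|t)+\gamma_i\bigr)
-\sigma(\tau|t)\,d\tau\,\eta_i^{\top}\nabla_{\hat{\mathbf{u}}}\mathcal{B}(\cdot).
\end{align*}
Cauchy--Schwarz bounds the last term in magnitude by $\sigma(\tau|t)\,d\tau\,\|\eta_i\|\,\|\nabla_{\hat{\mathbf{u}}}\mathcal{B}(\cdot)\|$. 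Invoking $\|\nabla_{\hat{\mathbf{u}}}\mathcal{B}\|\le\max\{\|\nabla_{\hat{\mathbf{u}}}\mathcal{B}\|,\psi\}$ together with the hypothesis $\sigma(\tau|t)\le\sigma_i^{\hat{\mathbf{u}}}(\tau|t)$ from \eqref{eq:Sigmas} then pushes this magnitude below $\delta_i^{\hat{\mathbf{u}}}(\tau-1|t)$. Finally, using the explicit formula $\delta_i^{\hat{\mathbf{u}}}(\tau-1|t)=-(\eta_i^{\top}\hat{\mathbf{u}}(\tau-1|t)+\gamma_i+\epsilon)/\|\eta_i\|$ for the perpendicular distance to the tightened hyperplane, I would absorb the perturbation into the existing constraint slack and conclude $\eta_i^{\top}\hat{\mathbf{u}}(\tau|t)+\gamma_i\le 0$.

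The dual argument is cleaner because $\mathcal{D}_{\hat{\lambda}}$ is the non-negative orthant, so I would split on whether $\hat{\lambda}_i(\tau-1|t)=0$ or $>0$. In the former case, the projection operator \eqref{eq:PhiDefinition} either leaves the effective flow non-negative (when $[\nabla_{\hat{\lambda}}\mathcal{B}]_i\ge 0$) or cancels the would-be descent direction exactly (when $[\nabla_{\hat{\lambda}}\mathcal{B}]_i<0$), so $\hat{\lambda}_i$ cannot decrease. In the latter case $[\Phi]_i=0$, and the increment magnitude $\sigma(\tau|t)\,d\tau\,\bigl|[\nabla_{\hat{\lambda}}\mathcal{B}]_i+[\Phi]_i\bigr|$ is upper bounded, via $\sigma(\tau|t)\le\sigma_i^{\hat{\lambda}}(\tau|t)$ and the $\max\{\cdot,\psi\}$ smoothing, by $\delta_i^{\hat{\lambda}}(\tau-1|t)=\hat{\lambda}_i(\tau-1|t)-\epsilon$, giving $\hat{\lambda}_i(\tau|t)\ge\epsilon>0$.

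The delicate step is the primal one: one must bridge a Euclidean (perpendicular) distance $\delta_i^{\hat{\mathbf{u}}}$ with a linear-form worst-case increment bounded through Cauchy--Schwarz by $\|\eta_i\|\,\|\nabla_{\hat{\mathbf{u}}}\mathcal{B}\|$. The factor $\|\eta_i\|$ in the denominator of $\sigma_i^{\hat{\mathbf{u}}}$ in \eqref{eq:Sigmas} and the tightening slack $\epsilon$ embedded in $\delta_i^{\hat{\mathbf{u}}}$ are precisely what align these two scales so that the original constraint is preserved. The smoothing parameter $\psi$ is purely technical, preventing $\sigma_i^{\hat{\mathbf{u}}}(\tau|t)$ and $\sigma_i^{\hat{\lambda}}(\tau|t)$ from blowing up when the corresponding gradient terms are close to zero.
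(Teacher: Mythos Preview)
Your proposal is correct and follows essentially the same route as the paper: apply Cauchy--Schwarz to bound $|\eta_i^\top(\hat{\mathbf{u}}(\tau|t)-\hat{\mathbf{u}}(\tau-1|t))|$, insert the $\max\{\cdot,\psi\}$ smoothing, invoke $\sigma(\tau|t)\le\sigma_i^{\hat{\mathbf{u}}}(\tau|t)$ to cap that increment by $\delta_i^{\hat{\mathbf{u}}}(\tau-1|t)$, and conclude membership in $\mathcal{D}_{\hat{\mathbf{u}}}$; the dual argument is the scalar analogue. Your write-up is slightly more explicit than the paper's in two places---you spell out the closed-form perpendicular-distance formula $\delta_i^{\hat{\mathbf{u}}}=-(\eta_i^\top\hat{\mathbf{u}}+\gamma_i+\epsilon)/\|\eta_i\|$, and you split the dual case into $\hat{\lambda}_i(\tau-1|t)=0$ (handled directly by the projection $\Phi$) versus $\hat{\lambda}_i(\tau-1|t)>0$ (handled by the step-size bound)---whereas the paper treats both coordinates through the same ``increment bounded by distance'' inequality and defers the final geometric implication to its illustrative figures.
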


\begin{proof}
The Cauchy–Schwarz inequality implies that:
\begin{align}\label{eq:Comment1}
\left\Vert\eta_i^{\top}\big(\hat{\mathbf{u}}(\tau|t) - \hat{\mathbf{u}}(\tau-1|t)\big)\right\Vert \leq\left\Vert\eta_i\right\Vert \left\Vert\hat{\mathbf{u}}(\tau|t) - \hat{\mathbf{u}}(\tau-1|t)\right\Vert,
\end{align}
for all $i\in\{1,\cdots,\bar{c}\}$. Thus, according to \eqref{eq:DisREAP1}, it follows from \eqref{eq:Comment1} that:
\begin{align}\label{eq:Comment2}
&\left\Vert\eta_i^{\top}\big(\hat{\mathbf{u}}(\tau|t) - \hat{\mathbf{u}}(\tau-1|t)\big)\right\Vert \leq\left\Vert\eta_i\right\Vert \cdot d\tau \cdot \sigma(\tau|t) \cdot \bigg\Vert \nabla_{\hat{\mathbf{u}}} \mathcal{B}\big(x(t), r,\nonumber\\
&~~~~~~~~~~~~~~~~~~~~\hat{\mathbf{u}}(\tau-1|t),\hat{\lambda}(\tau-1|t)\big)\bigg\Vert\nonumber\\
&\leq\left\Vert\eta_i\right\Vert \cdot d\tau \cdot \sigma(\tau|t) \cdot \max\Big\{\Big\Vert \nabla_{\hat{\mathbf{u}}} \mathcal{B}\big(x(t), r,\nonumber\\
&~~~~~~~~~~~~~~~~~~~~\hat{\mathbf{u}}(\tau-1|t),\hat{\lambda}(\tau-1|t)\big)\Big\Vert,\psi\Big\},
\end{align}
for any $\psi\in\mathbb{R}_{>0}$. Also, from \eqref{eq:DisREAP2}, we have:
\begin{align}\label{eq:Comment3}
&\left\vert\hat{\lambda}_i(\tau|t) - \hat{\lambda}_i(\tau-1|t)\big)\right\vert \leq d\tau \cdot \sigma(\tau|t)\cdot\Big\vert\big[\nabla_{\hat{\lambda}} \mathcal{B}\big(x(t), r,\hat{\mathbf{u}}(\tau-1|t),\nonumber\\
&~~~~~~~~~~~~~~~~~~~~\hat{\lambda}(\tau-1|t)\big)\big]_i+\big[\Phi(\tau-1|t)\big]_i\Big\vert\nonumber\\
&\leq d\tau \cdot \sigma(\tau|t)\cdot\max\Big\{\Big\vert\big[\nabla_{\hat{\lambda}} \mathcal{B}\big(x(t), r,\hat{\mathbf{u}},(\tau-1|t),\hat{\lambda}(\tau-1|t)\big)\big]_i\nonumber\\
&~~~~~~~~~~~~~~~~~~~~+\big[\Phi(\tau-1|t)\big]_i\Big\vert,\psi\Big\},
\end{align}
for all $i\in\{1,\cdots,\bar{c}\}$.

Suppose that $\hat{\mathbf{u}}(\tau-1|t)\in\mathcal{D}_{\hat{\mathbf{u}}}$ and $\hat{\lambda}(\tau-1|t)\in\mathcal{D}_{\hat{\lambda}}$, and let $\delta_i^{\hat{\mathbf{u}}}(\tau-1|t)$ and $\delta_i^{\hat{\lambda}}(\tau-1|t)$ be defined as above. Suppose that $\sigma(\tau|t)$ is selected such that $\left\Vert\eta_i\right\Vert \cdot d\tau \cdot \sigma(\tau|t) \cdot \max\Big\{\Big\Vert \nabla_{\hat{\mathbf{u}}} \mathcal{B}\big(x(t), r,\hat{\mathbf{u}}(\tau-1|t),\hat{\lambda}(\tau-1|t)\big)\Big\Vert,\psi\Big\}\leq \delta_i^{\hat{\mathbf{u}}}(\tau-1|t)$ and $d\tau \cdot \sigma(\tau|t)\cdot\max\Big\{\Big\vert\big[\nabla_{\hat{\lambda}} \mathcal{B}\big(x(t), r,\hat{\mathbf{u}},(\tau-1|t),\hat{\lambda}(\tau-1|t)\big)\big]_i+\big[\Phi(\tau-1|t)\big]_i\Big\vert,\psi\Big\}
\leq\delta_i^{\hat{\lambda}}(\tau-1|t)$ for all $i\in\{1,\cdots,\bar{c}\}$. Thus, inequalities \eqref{eq:Comment2} and \eqref{eq:Comment3} imply that $\left\Vert\eta_i^{\top}\big(\hat{\mathbf{u}}(\tau|t) - \hat{\mathbf{u}}(\tau-1|t)\big)\right\Vert\leq\delta_i^{\hat{\mathbf{u}}}(\tau-1|t)$ and $\left\vert\hat{\lambda}_i(\tau|t) - \hat{\lambda}_i(\tau-1|t)\big)\right\vert\leq\delta_i^{\hat{\lambda}}(\tau-1|t)$ for all $i\in\{1,\cdots,\bar{c}\}$; in other words, $\hat{\mathbf{u}}(\tau|t)\in\mathcal{D}_{\hat{\mathbf{u}}}$ and $\hat{\lambda}(\tau|t)\in\mathcal{D}_{\hat{\lambda}}$. See Figure \ref{fig:delta} and \eqref{fig:lambda} for geometric illustrations.

Therefore, if $\sigma(\tau|t)$ satisfies condition \eqref{eq:SigmaCondition} at every step $\tau$, the sets $\mathcal{D}_{\hat{\mathbf{u}}}$ and $\mathcal{D}_{\hat{\lambda}}$ remain invariant when the primal and dual variables are updated in discrete time as in \eqref{eq:DisREAP}.
\end{proof}

\begin{remark}\label{remark:Epsilon}
The constant $\epsilon$ define in \eqref{eq:distance} ensures that $\eta_i^{\top} \hat{\mathbf{u}}(\tau|t)+ \gamma_i <\epsilon$, and consequently, $\log \Big(-\beta \big(\eta_i^{\top} \hat{\mathbf{u}}(\tau|t)+ \gamma_i  + 1/\beta\big) + 1 \Big)>0$ for all $\tau$. Therefore, $\mathcal{B}(\cdot)$, and $\nabla_{\hat{\mathbf{u}}} \mathcal{B}(\cdot)$ and $\nabla_{\hat{\lambda}} \mathcal{B}(\cdot)$ are both bounded and definite. This means that the dynamical model given in \eqref{eq:DisREAP} is implementable.
\end{remark}

\begin{remark}
Considering the worst-case scenario (i.e., using the Euclidean distance from the constraints and taking the minimum over all as in \eqref{eq:SigmaCondition}) can lead to a conservative solution, even though it ensures the invariance of the sets $\mathcal{D}_{\hat{\mathbf{u}}}$ and $\mathcal{D}_{\hat{\lambda}}$. Taking into account the direction of trajectory of system \eqref{eq:DisREAP} can help reduce this conservatism. Future work will explore this approach to reduce the conservatism of the scheme.
\end{remark}

\begin{remark}\label{remark:nonzero}
At time instant $t$, when $\delta_i^{\hat{\mathbf{u}}}(\tau-1|t)=0$ and/or $\delta_i^{\hat{\lambda}}(\tau-1|t)=0$, it can be concluded from \eqref{eq:Sigmas} that $\sigma(\tau|t)$ becomes zero. Hence, according to \eqref{eq:DisREAP}, $\hat{\mathbf{u}}(\tau+1|t) =\hat{\mathbf{u}}(\tau|t)$ and $\hat{\lambda}(\tau+1|t)=\hat{\lambda}(\tau|t)$, implying that $\sigma(\tau+\vartheta|t)=0,~\forall \vartheta=0,1,\cdots$. As a result, the evolution of the primal and dual variables will stall for time instant $t$. At time instant $t+1$, by constructing the initial condition $\hat{\mathbf{u}}(0|t+1)$ as discussed in Remark \ref{Remark:InitialFeasibility}, $\delta_i^{\hat{\mathbf{u}}}(\tau-1|t)$ and  $\delta_i^{\hat{\lambda}}(\tau-1|t)$ would be nonzero, and thus the primal and dual variables will adjust toward the optimal values. This points will be demonstrated through simulation and experimental results in Section \ref{sec:Results}. 
\end{remark}

\begin{figure}[t]
    \centering
    \includegraphics[width=.65\linewidth]{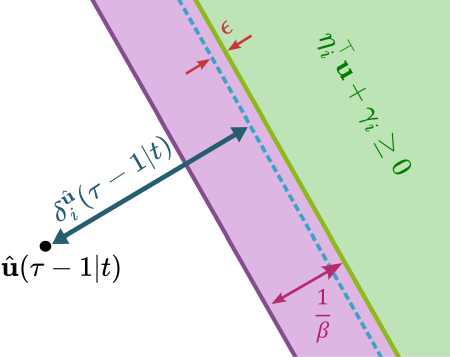}
    \caption{Geometric illustration of how to determine the KKT parameter $\sigma(\tau|t)$ to maintain the invariance of the set $\mathcal{D}_{\hat{\mathbf{u}}}$.}
    \label{fig:delta}
\end{figure}

\begin{figure}[t]
    \centering
    \includegraphics[width=.8\linewidth]{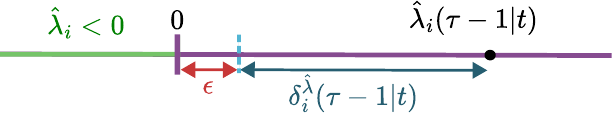}
    \caption{Geometric illustration of how to determine the KKT parameter $\sigma(\tau|t)$ to maintain the invariance of the set $\mathcal{D}_{\hat{\lambda}}$.}
    \label{fig:lambda}
\end{figure}

\subsection{Theoretical Properties}
This subsection analytically proves theoretical properties of the discrete-time REAP given in \eqref{eq:DisREAP}. Similar to the continuous-time scheme detailed in \cite{Hosseinzadeh2023RobustTermination}, we will employ Lyapunov theory and Alexandrov's theorem \cite[p. 333]{Alexandrov2005} to prove the convergence and anytime feasibility of the discrete-time REAP \eqref{eq:DisREAP}, respectively. First, we show that updating the primal and dual variables in discrete time as in \eqref{eq:DisREAP} does not violate boundedness and convergence of REAP.

\begin{theorem}
Let $\big(\hat{\mathbf{u}}(\tau|t), \hat{\lambda}(\tau|t)\big)$ be the trajectory of \eqref{eq:DisREAP}. Then, starting from the initial condition $\big(\hat{\mathbf{u}}(0|t), \hat{\lambda}(0|t)\big)$ such that $\hat{\mathbf{u}}(0|t)\in\mathcal{D}_{\hat{\mathbf{u}}}$ and $\hat{\lambda}(0|t)\in\mathcal{D}_{\hat{\lambda}}$, the following properties hold:
\begin{itemize}
    \item The following set is invariant:
    \begin{align}\label{eq:SetT}
        \mathcal{T}=\Big\{\big(\hat{\mathbf{u}},\hat{\lambda}\big)\big|&\left\Vert\hat{\mathbf{u}}-\mathbf{u}^{\dag}(t)\right\Vert^2+\left\Vert\hat{\lambda}-\lambda^{\dag}(t)\right\Vert^2\leq\left\Vert\hat{\mathbf{u}}(0|t)-\mathbf{u}^{\dag}(t)\right\Vert^2\nonumber\\
        &+\left\Vert\hat{\lambda}(0|t)-\lambda^{\dag}(t)\right\Vert^2\Big\};
    \end{align}
    that is, $\hat{\mathbf{u}}(\tau|t)$ and $\hat{\lambda}(\tau|t)$ are bounded for all $\tau$.  

    \item if $\sigma(\tau|t)$ is not equivalently equal to zero at all $\tau$, $\big(\hat{\mathbf{u}}(\tau|t), \hat{\lambda}(\tau|t)\big)$  converges to $\left(\mathbf{u}^{\dag}(t), \lambda^{\dag}(t)\right)$ as $\tau\rightarrow\infty$.
\end{itemize}

\end{theorem}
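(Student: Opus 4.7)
The plan is to prove both properties via a single Lyapunov argument built on the candidate
\[
V(\hat{\mathbf{u}},\hat{\lambda}) = \tfrac{1}{2}\|\hat{\mathbf{u}} - \mathbf{u}^{\dag}(t)\|^2 + \tfrac{1}{2}\|\hat{\lambda} - \lambda^{\dag}(t)\|^2,
\]
which is exactly the quantity defining the sub-level set $\mathcal{T}$ in \eqref{eq:SetT}. Invariance of $\mathcal{T}$ is then equivalent to showing the monotonicity $V(\hat{\mathbf{u}}(\tau|t),\hat{\lambda}(\tau|t)) \leq V(\hat{\mathbf{u}}(\tau-1|t),\hat{\lambda}(\tau-1|t))$ for every $\tau\geq 1$, and convergence in the second bullet will follow by combining this monotonicity with a strict-decrease argument away from the saddle point $(\mathbf{u}^{\dag}(t),\lambda^{\dag}(t))$.

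First I would expand the one-step increment $\Delta V(\tau|t) := V(\tau|t) - V(\tau-1|t)$ by substituting the update rules in \eqref{eq:DisREAP}. This splits $\Delta V$ into a first-order term that couples the primal/dual errors to $\nabla_{\hat{\mathbf{u}}}\mathcal{B}$ and $\nabla_{\hat{\lambda}}\mathcal{B}+\Phi$, plus a quadratic remainder proportional to $(\sigma(\tau|t)\cdot d\tau)^2$. Using the convexity of $\mathcal{B}$ in the primal variable and its linearity in the dual variable, together with the saddle-point/KKT optimality of $(\mathbf{u}^{\dag}(t),\lambda^{\dag}(t))$ for the tightened problem, the first-order term is bounded above by $-\sigma(\tau|t)\cdot d\tau\cdot G(\tau-1|t)$, where $G$ is a non-negative ``duality-gap''-type quantity that vanishes only at the saddle point. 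The projection term $\Phi(\tau-1|t)$ would be handled by the standard primal–dual complementarity argument: its inner product with $\hat{\lambda}(\tau-1|t)-\lambda^{\dag}(t)$ is non-positive whenever both dual vectors lie in $\mathcal{D}_{\hat{\lambda}}$, which Theorem~\ref{Theorem1} already guarantees.

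Next, I would control the quadratic remainder using the bound on $\sigma(\tau|t)$ from \eqref{eq:SigmaCondition}. The numerators $\delta_i^{\hat{\mathbf{u}}}$ and $\delta_i^{\hat{\lambda}}$ appearing in \eqref{eq:Sigmas} force the step length to shrink continuously as the iterate approaches the boundaries of $\mathcal{D}_{\hat{\mathbf{u}}}$ or $\mathcal{D}_{\hat{\lambda}}$, while the smoothing factor $\psi$ and the fixed $d\tau$ keep the step bounded in the interior. This yields a uniform upper bound on the quadratic term in terms of the same gap quantity, so that $\Delta V \leq 0$ holds whenever $\sigma(\tau|t)$ satisfies \eqref{eq:SigmaCondition}, establishing invariance of $\mathcal{T}$ and the boundedness claim. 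For the second bullet, once $\sigma(\tau|t)$ is not identically zero, the non-increasing sequence $V(\tau|t)$ together with confinement of the trajectory to the compact set $\mathcal{T}\cap(\mathcal{D}_{\hat{\mathbf{u}}}\times\mathcal{D}_{\hat{\lambda}})$ permits a discrete LaSalle-type invariance argument: any accumulation point must force the descent inequality to hold with equality, which in turn forces $G=0$, and strict convexity of the primal barrier then pins the limit to $(\mathbf{u}^{\dag}(t),\lambda^{\dag}(t))$.

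The main obstacle I anticipate is making the quadratic remainder tight enough that it never overwhelms the first-order descent, particularly on steps where the projection $\Phi$ is active (iterate on the boundary of $\mathcal{D}_{\hat{\lambda}}$) and the raw dual gradient points outward. The bound $\sigma_i^{\hat{\lambda}}(\tau|t)$ in \eqref{eq:Sigmas} is precisely what is needed, but matching it cleanly against the descent term may require a step-size-dependent merit function (e.g., a reweighted combination of $V$ and the gap $G$) rather than the plain Euclidean $V$ above. A secondary subtlety is the degenerate case $\sigma(\tau|t)\equiv 0$ explicitly excluded in the statement; as noted in Remark~\ref{remark:nonzero}, this can only arise when the initial iterate sits exactly on a boundary face, and I would formalize its exclusion by observing that starting strictly inside $\mathcal{D}_{\hat{\mathbf{u}}}\times\mathcal{D}_{\hat{\lambda}}$ makes every $\delta_i^{\hat{\mathbf{u}}},\delta_i^{\hat{\lambda}}$ positive and hence $\sigma(\tau|t)>0$ throughout the evolution.
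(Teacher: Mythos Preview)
Your Lyapunov candidate matches the paper's (up to the factor $\tfrac12$), but the two arguments diverge immediately afterward. You expand $\Delta V$ into a first-order term plus a quadratic remainder of order $(\sigma\,d\tau)^2$ and then plan to dominate that remainder via the step-size bound \eqref{eq:SigmaCondition}. The paper instead writes $\|a\|^2-\|b\|^2=(a-b)^\top(a+b)$ with $a=z(\tau)$, $b=z(\tau-1)$ and $z=(\hat{\mathbf{u}}-\mathbf{u}^\dag,\hat{\lambda}-\lambda^\dag)$; since $a-b$ equals $-\sigma\,d\tau$ times the projected primal--dual gradient, no separate quadratic remainder ever appears. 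The resulting cross term is then bounded through a strong-monotonicity inequality, the paper's \eqref{eq:prrofofNabla}, imported from the continuous-time REAP analysis and applied once against $z(\tau)$ and once against $z(\tau-1)$, giving $\Delta W\le -\sigma\,d\tau\,\mu\big(\|z(\tau)\|^2+\|z(\tau-1)\|^2\big)$ directly. Two practical consequences of this difference: (i) the paper never invokes the step-size condition \eqref{eq:SigmaCondition} in this proof---only $\sigma(\tau|t)\ge 0$ is used---so your plan to lean on \eqref{eq:Sigmas} to tame the remainder is not needed here; and (ii) because the paper obtains strict Lyapunov decrease whenever $\sigma>0$, convergence follows immediately and the discrete LaSalle machinery you outline is unnecessary. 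Your route is the textbook discrete-descent analysis and is sound in principle, but the obstacle you correctly flag---balancing the $(\sigma\,d\tau)^2$ term against the first-order descent---is precisely what the paper's symmetric decomposition together with \eqref{eq:prrofofNabla} is engineered to sidestep.
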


\begin{proof}
Since the modified barrier function $\mathcal{B}\big(x(t), r, \hat{\mathbf{u}}(\tau|t), \hat{\lambda}(\tau|t)\big)$ is strongly convex with respect to the primal variable $\hat{\mathbf{u}}$, following arguments similar to \cite{Hosseinzadeh2023RobustTermination,Hosseinzadeh2022_ROTEC,Ryu2016} implies that there exists $\mu\in\mathbb{R}_{>0}$ such that:
\begin{align}\label{eq:prrofofNabla}
\left[\begin{array}{c}
\nabla_{\hat{\mathbf{u}}} \mathcal{B}(\cdot) \\
-\nabla_{\hat{\lambda}} \mathcal{B}(\cdot)
\end{array}\right]^\top\left[\begin{array}{c}
\hat{\mathbf{u}}(\tau|t)-\mathbf{u}^{\dag}(t) \\
\hat{\lambda}(\tau|t)-\lambda^{\dag}(t)
\end{array}\right]\geq \mu\cdot\left\|\left[\begin{array}{c}
\hat{\mathbf{u}}(\tau|t)-\mathbf{u}^{\dag}(t) \\
\hat{\lambda}(\tau|t)-\lambda^{\dag}(t)
\end{array}\right]\right\|^2,
\end{align}
where $\nabla_{\hat{\mathbf{u}}}\mathcal{B}(\cdot)$ and $\nabla_{\hat{\lambda}}\mathcal{B}(\cdot)$ indicate gradient of the modified barrier function with respect to the primal variable $\hat{\mathbf{u}}$ and dual variable $\hat{\lambda}$, respectively. Note that inequality \eqref{eq:prrofofNabla} holds true whether the gradients are evaluated at step $\tau$ or step $\tau-1$.


At this stage, we consider the following Lyapunov function:
\begin{align}
W\big(\hat{\mathbf{u}}(\tau|t),\hat{\lambda}(\tau|t)\big)=\left[\begin{matrix}
\hat{\mathbf{u}}(\tau|t)-\mathbf{u}^{\dag}(t) \\
\hat{\lambda}(\tau|t)-\lambda^{\dag}(t)
\end{matrix}\right]^{\top}\left[\begin{matrix}
\hat{\mathbf{u}}(\tau|t)-\mathbf{u}^{\dag}(t) \\
\hat{\lambda}(\tau|t)-\lambda^{\dag}(t)
\end{matrix}\right],
\end{align}
whose time difference according to \eqref{eq:DisREAP} is\footnote{For the sake of brevity, we denote $W\big(\hat{\mathbf{u}}(\tau|t),\hat{\lambda}(\tau|t)\big)$ and $W\big(\hat{\mathbf{u}}(\tau-1|t),\hat{\lambda}(\tau-1|t)\big)$ by $W(\tau|t)$ and $W(\tau-1|t)$, respectively.}:
\begin{align}\label{eq:TimeDifference1}
&\Delta W(\tau|t):=W(\tau|t)-W(\tau-1|t)\nonumber\\
&=\left[\begin{matrix}
\hat{\mathbf{u}}(\tau|t)-\mathbf{u}^{\dag}(t) \\
\hat{\lambda}(\tau|t)-\lambda^{\dag}(t)
\end{matrix}\right]^{\top}\left[\begin{matrix}
\hat{\mathbf{u}}(\tau|t)-\mathbf{u}^{\dag}(t)\\
\hat{\lambda}(\tau|t)-\lambda^{\dag}(t)
\end{matrix}\right]\nonumber\\
&-\left[\begin{matrix}
\hat{\mathbf{u}}(\tau-1|t)-\mathbf{u}^{\dag}(t) \\
\hat{\lambda}(\tau-1|t)-\lambda^{\dag}(t)
\end{matrix}\right]^{\top}\left[\begin{matrix}
\hat{\mathbf{u}}(\tau-1|t)-\mathbf{u}^{\dag}(t) \\
\hat{\lambda}(\tau-1|t)-\lambda^{\dag}(t)
\end{matrix}\right]\nonumber\\
&=\left[\begin{matrix}
\hat{\mathbf{u}}(\tau|t)-\mathbf{u}^{\dag}(t) \\
\hat{\lambda}(\tau|t)-\lambda^{\dag}(t)
\end{matrix}\right]^{\top}\left[\begin{matrix}
\hat{\mathbf{u}}(\tau-1|t)-\mathbf{u}^{\dag}(t)\\
\hat{\lambda}(\tau-1|t)-\lambda^{\dag}(t)
\end{matrix}\right]\nonumber\\
&+\left[\begin{matrix}
\hat{\mathbf{u}}(\tau|t)-\mathbf{u}^{\dag}(t) \\
\hat{\lambda}(\tau|t)-\lambda^{\dag}(t)
\end{matrix}\right]^{\top}\left[\begin{matrix}
-\sigma(\tau|t)\cdot d\tau\cdot \nabla_{\hat{\mathbf{u}}}\mathcal{B}(\tau-1|t)\\
\sigma(\tau|t)\cdot d\tau\cdot  \big(\nabla_{\hat{\lambda}}\mathcal{B}(\tau-1|t)+ \Phi(\tau-1|t) \big)
\end{matrix}\right]\nonumber\\
&-\left[\begin{matrix}
\hat{\mathbf{u}}(\tau-1|t)-\mathbf{u}^{\dag}(t) \\
\hat{\lambda}(\tau-1|t)-\lambda^{\dag}(t)
\end{matrix}\right]^{\top}\left[\begin{matrix}
\hat{\mathbf{u}}(\tau-1|t)-\mathbf{u}^{\dag}(t) \\
\hat{\lambda}(\tau-1|t)-\lambda^{\dag}(t)
\end{matrix}\right],
\end{align}
where $\nabla_{\hat{\mathbf{u}}} \mathcal{B}(\tau-1|t)$ and $\nabla_{\hat{\lambda}}\mathcal{B}(\tau-1|t)$ indicate $\nabla_{\hat{\mathbf{u}}}\mathcal{B}\big(x(t), r, \hat{\mathbf{u}}(\tau-1|t), \hat{\lambda}(\tau-1|t)\big)$ and $\nabla_{\hat{\lambda}}\mathcal{B}\big(x(t), r, \hat{\mathbf{u}}(\tau-1|t), \hat{\lambda}(\tau-1|t)\big)$, respectively. From \eqref{eq:TimeDifference1}, we have:
\begin{align}\label{eq:TimeDifference2}
&\Delta W(\tau|t)=\left[\begin{matrix}
\hat{\mathbf{u}}(\tau-1|t)-\mathbf{u}^{\dag}(t)\\
\hat{\lambda}(\tau-1|t)-\lambda^{\dag}(t)
\end{matrix}\right]^\top\left[\begin{matrix}
\hat{\mathbf{u}}(\tau|t)-\hat{\mathbf{u}}(\tau-1|t)\\
\hat{\lambda}(\tau|t)-\hat{\lambda}(\tau-1|t)
\end{matrix}\right]\nonumber\\
&+\left[\begin{matrix}
\hat{\mathbf{u}}(\tau|t)-\mathbf{u}^{\dag}(t) \\
\hat{\lambda}(\tau|t)-\lambda^{\dag}(t)
\end{matrix}\right]^{\top}\left[\begin{matrix}
-\sigma(\tau|t)\cdot d\tau\cdot \nabla_{\hat{\mathbf{u}}}\mathcal{B}(\tau-1|t)\\
\sigma(\tau|t)\cdot d\tau\cdot  \big(\nabla_{\hat{\lambda}} \mathcal{B}(\tau-1|t)+ \Phi(\tau-1|t) \big)
\end{matrix}\right],
\end{align}
which according to \eqref{eq:DisREAP} implies that:
\begin{align}\label{eq:TimeDifference3}
&\Delta W(\tau|t)=-\sigma(\tau|t)\cdot d\tau\cdot\left[\begin{matrix}
 \nabla_{\hat{\mathbf{u}}}\mathcal{B}(\tau-1|t)\\
-\nabla_{\hat{\lambda}} \mathcal{B}(\tau-1|t)-\Phi(\tau-1|t)
\end{matrix}\right]^\top\cdot\nonumber\\
&\left(\left[\begin{matrix}
\hat{\mathbf{u}}(\tau|t)-\mathbf{u}^{\dag}(t) \\
\hat{\lambda}(\tau|t)-\lambda^{\dag}(t)
\end{matrix}\right]+\left[\begin{matrix}
\hat{\mathbf{u}}(\tau-1|t)-\mathbf{u}^{\dag}(t)\\
\hat{\lambda}(\tau-1|t)-\lambda^{\dag}(t)
\end{matrix}\right]\right).
\end{align}

From \eqref{eq:PhiDefinition}, it can be concluded that $\big(\hat{\lambda}(\tau|t)-\lambda^{\dag}(t)\big)^\top\big(\nabla_{\hat{\lambda}} \mathcal{B}(\tau|t)+ \Phi(\tau|t) \big)\leq\big(\hat{\lambda}(\tau|t)-\lambda^{\dag}(t)\big)^\top\nabla_{\hat{\lambda}} \mathcal{B}(\tau|t)$. Thus, it follows from \eqref{eq:TimeDifference3} that:
\begin{align}\label{eq:TimeDifference4}
&\Delta W(\tau|t)\leq-\sigma(\tau|t)\cdot d\tau\cdot\left[\begin{matrix}
 \nabla_{\hat{\mathbf{u}}}\mathcal{B}(\tau-1|t)\\
-\nabla_{\hat{\lambda}} \mathcal{B}(\tau-1|t)
\end{matrix}\right]^\top\cdot\nonumber\\
&\left(\left[\begin{matrix}
\hat{\mathbf{u}}(\tau|t)-\mathbf{u}^{\dag}(t) \\
\hat{\lambda}(\tau|t)-\lambda^{\dag}(t)
\end{matrix}\right]+\left[\begin{matrix}
\hat{\mathbf{u}}(\tau-1|t)-\mathbf{u}^{\dag}(t)\\
\hat{\lambda}(\tau-1|t)-\lambda^{\dag}(t)
\end{matrix}\right]\right),
\end{align}

Using the inequality \eqref{eq:prrofofNabla} twice in \eqref{eq:TimeDifference4} yields:
\begin{align}\label{eq:TimeDifference5} 
\Delta W(\tau|t)\leq-\sigma(\tau|t)\cdot d\tau\cdot\mu\cdot\Bigg(\left\|\left[\begin{array}{c}
\hat{\mathbf{u}}(\tau|t)-\mathbf{u}^{\dag}(t) \\
\hat{\lambda}(\tau|t)-\lambda^{\dag}(t)
\end{array}\right]\right\|^2\nonumber\\
+\left\|\left[\begin{array}{c}
\hat{\mathbf{u}}(\tau-1|t)-\mathbf{u}^{\dag}(t) \\
\hat{\lambda}(\tau-1|t)-\lambda^{\dag}(t)
\end{array}\right]\right\|^2\Bigg),
\end{align}
which implies that the set $\mathcal{T}$ given in \eqref{eq:SetT} is invariant; that is, $\big(\hat{\mathbf{u}}(\tau|t), \hat{\lambda}(\tau|t)\big)\in\mathcal{T}$ for all $\tau$, and thus, $\hat{\mathbf{u}}(\tau|t)$ and $\hat{\lambda}(\tau|t)$ are bounded for all $\tau$.

For what regards the second property, when $\sigma(\tau|t)$ does not equivalently remain at zero, it follows from \eqref{eq:TimeDifference5} that $\Delta W(\tau|t)<0$ for all $\big(\hat{\mathbf{u}}(\tau|t), \hat{\lambda}(\tau|t)\big)\neq\left(\mathbf{u}^{\dag}(t), \lambda^{\dag}(t)\right)$. This implies the convergence of $\big(\hat{\mathbf{u}}(\tau|t), \hat{\lambda}(\tau|t)\big)$ to $\left(\mathbf{u}^{\dag}(t), \lambda^{\dag}(t)\right)$ as $\tau\rightarrow\infty$. 
\end{proof}

Next, we show that updating the primal and dual variables in discrete time as in \eqref{eq:DisREAP} does not violate the anytime feasibility property of REAP.

\begin{theorem}\label{Theorem3}
Consider the discrete-time REAP given in \eqref{eq:DisREAP}.Let $\hat{\mathbf{u}}(0|t)\in\mathcal{D}_{\hat{\mathbf{u}}}$ and $\hat{\lambda}(0|t)\in\mathcal{D}_{\hat{\lambda}}$. Then, $\hat{\mathbf{u}}(\tau|t)$ for all $\tau$ satisfies the constraints given in \eqref{eq:ConstraintTerminal}, if the KKT parameter $\sigma(\tau|t)$ satisfies the condition given in \eqref{eq:SigmaCondition}. 
\end{theorem}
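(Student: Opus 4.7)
The plan is to proceed by a straightforward induction on the computation step $\tau$, using Theorem \ref{Theorem1} as the one-step invariance engine. The key observation that essentially collapses Theorem \ref{Theorem3} into a corollary of Theorem \ref{Theorem1} is that the constraint set $\mathbf{U}$ appearing in \eqref{eq:ConstraintTerminal} and the set $\mathcal{D}_{\hat{\mathbf{u}}}$ defined in \eqref{eq:ssconstraint} are specified by exactly the same half-space inequalities $\eta_i^\top\mathbf{u}+\gamma_i\leq 0$ for $i=1,\ldots,\bar{c}$. Hence establishing $\hat{\mathbf{u}}(\tau|t)\in\mathbf{U}$ for every $\tau$ is equivalent to establishing $\hat{\mathbf{u}}(\tau|t)\in\mathcal{D}_{\hat{\mathbf{u}}}$ for every $\tau$.

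With this equivalence in hand, I would set up the induction as follows. The base case $\tau=0$ is immediate from the hypotheses $\hat{\mathbf{u}}(0|t)\in\mathcal{D}_{\hat{\mathbf{u}}}$ and $\hat{\lambda}(0|t)\in\mathcal{D}_{\hat{\lambda}}$. For the inductive step, assume that $\hat{\mathbf{u}}(\tau-1|t)\in\mathcal{D}_{\hat{\mathbf{u}}}$ and $\hat{\lambda}(\tau-1|t)\in\mathcal{D}_{\hat{\lambda}}$. Since the hypothesis of Theorem \ref{Theorem3} ensures that $\sigma(\tau|t)$ satisfies condition \eqref{eq:SigmaCondition}, Theorem \ref{Theorem1} applies directly and yields $\hat{\mathbf{u}}(\tau|t)\in\mathcal{D}_{\hat{\mathbf{u}}}$ and $\hat{\lambda}(\tau|t)\in\mathcal{D}_{\hat{\lambda}}$. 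I would emphasize that carrying the dual invariance $\hat{\lambda}(\tau|t)\in\mathcal{D}_{\hat{\lambda}}$ through the induction is not optional: Theorem \ref{Theorem1} requires \emph{both} primal and dual iterates at step $\tau-1$ to lie in their respective sets in order to deliver the primal invariance at step $\tau$, since the distances $\delta_i^{\hat{\mathbf{u}}}$ and $\delta_i^{\hat{\lambda}}$ on the right-hand side of \eqref{eq:SigmaCondition} must both be well-defined and non-negative.

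There is no genuine obstacle here beyond bookkeeping; the only subtlety worth pointing out is that the bound \eqref{eq:SigmaCondition} can always be satisfied at each step, because by the inductive hypothesis and the definitions in \eqref{eq:distance}, the distances $\delta_i^{\hat{\mathbf{u}}}(\tau-1|t)$ and $\delta_i^{\hat{\lambda}}(\tau-1|t)$ are non-negative, so the right-hand side of \eqref{eq:SigmaCondition} is non-negative and the feasible choice $\sigma(\tau|t)\geq 0$ always exists. The boundary case in which the right-hand side vanishes forces $\sigma(\tau|t)=0$ and causes the iteration to stall at step $\tau$, but this does not violate the feasibility claim; it is exactly the behavior described in Remark \ref{remark:nonzero}, with subsequent feasibility restored at time instant $t+1$ through the warm-start construction of Remark \ref{Remark:InitialFeasibility}. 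Combining the induction with the identification $\mathcal{D}_{\hat{\mathbf{u}}}=\mathbf{U}$ then delivers $\hat{\mathbf{u}}(\tau|t)\in\mathbf{U}$ for all $\tau$, which is precisely the anytime feasibility statement of Theorem \ref{Theorem3}.
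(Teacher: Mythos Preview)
Your proof is correct and is in fact cleaner than the paper's own argument. You reduce Theorem~\ref{Theorem3} to an induction on $\tau$ with Theorem~\ref{Theorem1} supplying the one-step invariance, after observing that the constraint set $\mathbf{U}$ in \eqref{eq:ConstraintTerminal} and the set $\mathcal{D}_{\hat{\mathbf{u}}}$ in \eqref{eq:ssconstraint} are literally the same polyhedron. The paper takes a more circuitous route: it recasts constraint satisfaction as boundedness-from-above of the modified barrier function $\mathcal{B}$, and then assembles three ingredients --- definiteness of $\mathcal{B}$ via the $\epsilon$-buffer of Remark~\ref{remark:Epsilon}, invariance of $\mathcal{D}_{\hat{\mathbf{u}}}$ via Theorem~\ref{Theorem1} (exactly as you use it), and a near-boundary decrease property of $\mathcal{B}$ obtained by invoking Alexandrov's theorem from the continuous-time analysis. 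Since Theorem~\ref{Theorem1} already delivers $\hat{\mathbf{u}}(\tau|t)\in\mathcal{D}_{\hat{\mathbf{u}}}=\mathbf{U}$ directly, the barrier-function detour and Alexandrov's theorem are not actually needed to establish the stated conclusion; your shortcut exposes this. What the paper's longer argument implicitly buys is a slightly stronger claim --- strict feasibility and well-definedness of the log-barrier terms at every step --- which matters for the implementability of \eqref{eq:DisREAP} but is not what Theorem~\ref{Theorem3} asserts.
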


\begin{proof}  
As discussed in \cite{Hosseinzadeh2023RobustTermination}, $\mathcal{B}\big(x(t), r, \hat{\mathbf{u}}(\tau|t), \hat{\lambda}(\tau|t)\big)\rightarrow\infty$ only if $\eta_i^{\top} \hat{\mathbf{u}} + \gamma_i\rightarrow0^-$ for one (or more) $i$. Thus, the boundedness of $\mathcal{B}\big(x(t), r, \hat{\mathbf{u}}(\tau|t), \hat{\lambda}(\tau|t)\big)$ from above is equivalent to the constraint satisfaction for all $\tau$.

First, incorporating $\epsilon$ in computing the Euclidean distances from the primal and dual trajectories and the constraints as in \eqref{eq:distance} ensures that the modified barrier function $\mathcal{B}\big(x(t),r,\hat{\mathbf{u}}(\tau|t), \hat{\lambda}(\tau|t)\big)$ remains definite for all $\tau$ (see Remark \ref{remark:Epsilon}). Second, Theorem \ref{Theorem1} implies that the set $\mathcal{D}_{\hat{\mathbf{u}}}$ is invariant, and thus, $\hat{\mathbf{u}}(\tau|t)$ cannot jump the boundary of the set $\mathcal{D}_{\hat{\mathbf{u}}}$. Third, following arguments similar to \cite{Hosseinzadeh2023RobustTermination} and utilizing the Alexandrov's theorem \cite[p. 333]{Alexandrov2005}, it can be shown that as $\hat{\mathbf{u}}(\tau|t)$ approach the boundary of the set $\mathcal{D}_{\hat{\mathbf{u}}}$, $\mathcal{B}\big(x(t), r, \hat{\mathbf{u}}(\tau+1|t), \hat{\lambda}(\tau+1|t)\big)<\mathcal{B}\big(x(t), r, \hat{\mathbf{u}}(\tau|t), \hat{\lambda}(\tau|t)\big)$. 

Therefore, it can be concluded that the modified barrier function $\mathcal{B}\big(x(t),r,\hat{\mathbf{u}}(\tau|t), \hat{\lambda}(\tau|t)\big)$ is always definite and decreases along the trajectories of $\hat{\mathbf{u}}(\tau|t)$ when these trajectories are near the boundary of the  set $\mathcal{D}_{\hat{\mathbf{u}}}$, which completes the proof.


\end{proof}

\begin{remark}
The proposed discrete-time REAP may yield a conservative solution. The primary sources of this conservatism are: i) the tightening factor $\beta$ introduced in \eqref{eq:tightenedConstraints}; ii) the consideration of worst-case scenarios regarding the distance from the current primal and dual variables to the constraints; and iii) the asymptotic convergence of the auxiliary system in \eqref{eq:DisREAP}. Understanding the sources of conservatism allows us to employ the following methods, which are not mutually exclusive, to mitigate it: i) using a larger $\beta$; and ii) implementing an appropriate warm-starting scheme (such as the ones reported in \cite{Hosseinzadeh2023RobustTermination} and \cite{Wang2010}) to enhance convergence. 
\end{remark}

\subsection{DiscreteREAP Toolbox}
To broaden the impact of our work and make the proposed methodology publicly available to researcher, we have developed a MATLAB package, that is available at the following URL: \href{https://github.com/mhsnar/DiscreteREAP.git}{https://github.com/mhsnar/DiscreteREAP.git}. Details of the developed package are provided in Appendix. 

\begin{figure}[t]
\begin{center}
\includegraphics[width=\linewidth]{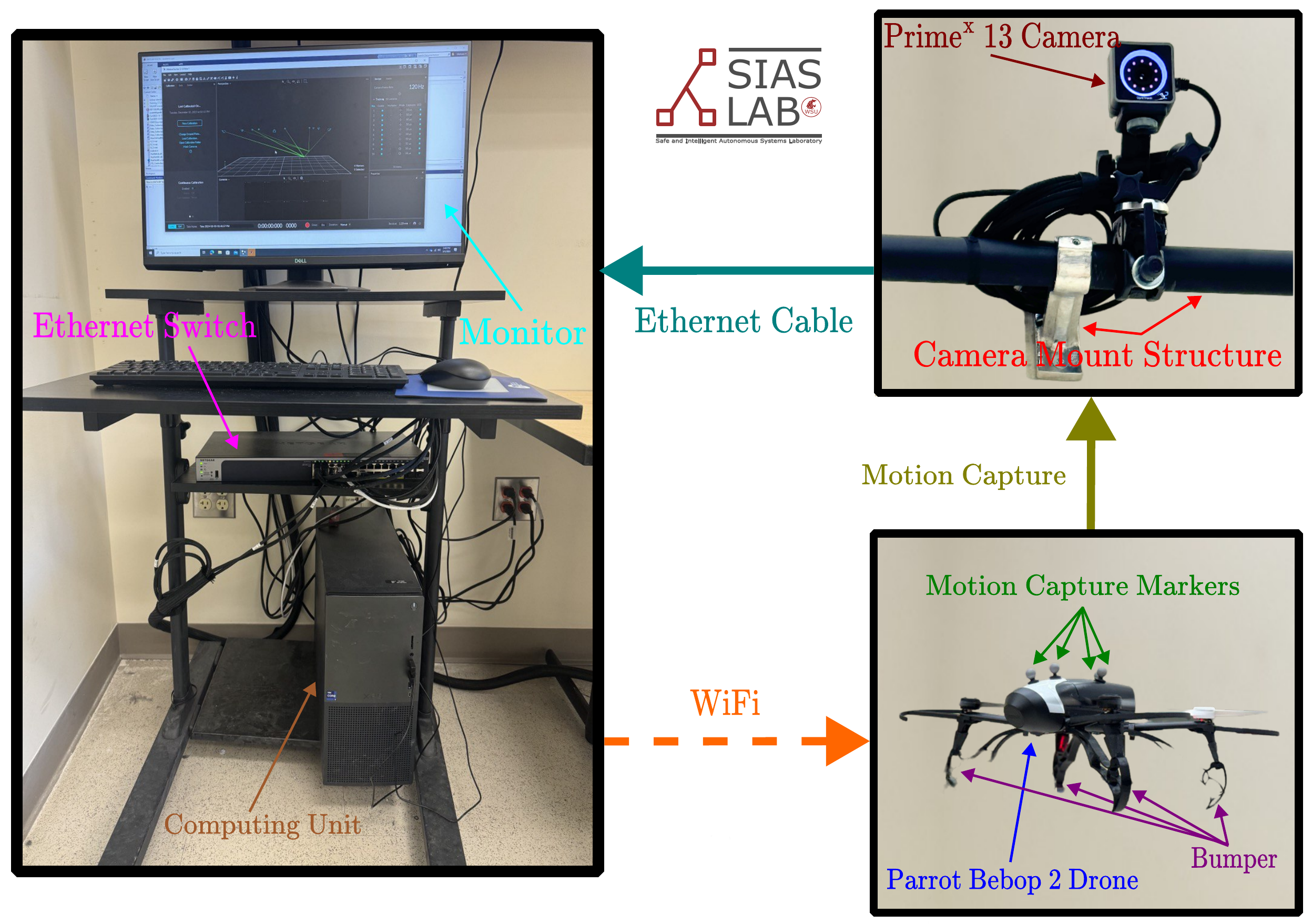}  \\
\caption{Overview of the experimental setup utilized to perform the experimental evaluation.}
\label{fig:network}
\end{center}
\end{figure}

\begin{figure*}[t]
    \centering
     \includegraphics[width=.75\linewidth]{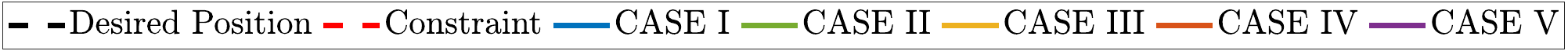}\\ \includegraphics[width=\linewidth]{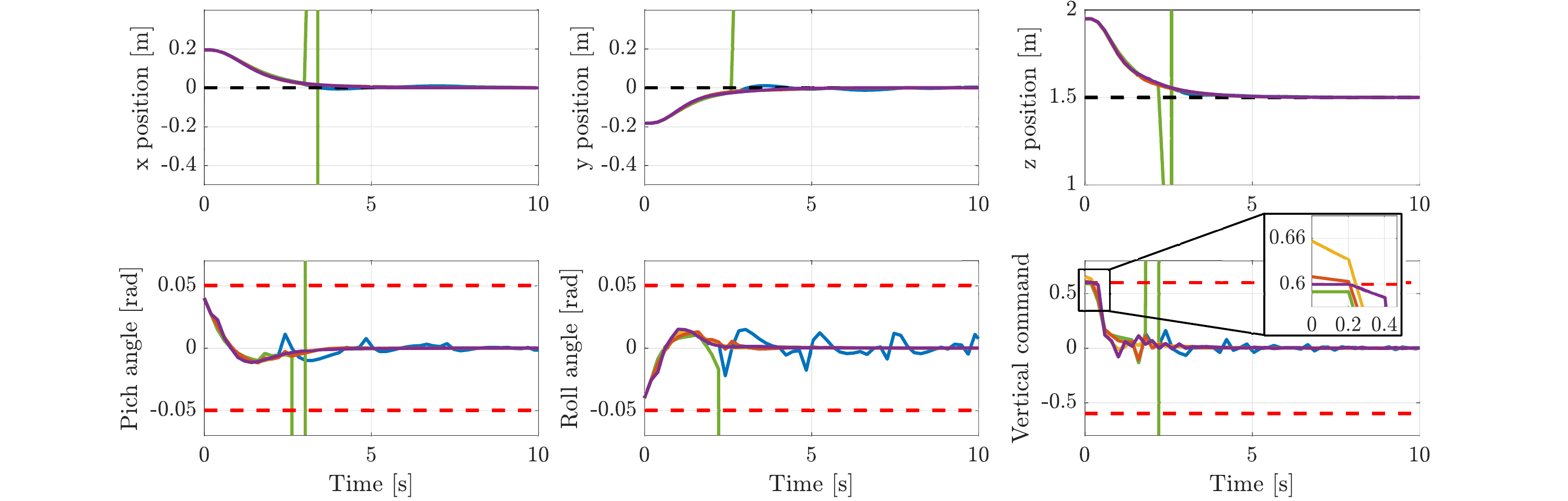}
    \caption{Time-profile of states and control inputs for all five cases.}
    \label{fig:oneExample}
\end{figure*}

\section{Simulation and Experimental Results}\label{sec:Results}

This section aims at evaluating the effectiveness of the proposed discrete-time REAP through extensive simulation studies and real-world experiments, focusing on the position control of a Parrot Bebop 2 drone. Our experimental setup (see Figure \ref{fig:network}) uses the \texttt{OptiTrack} system which consists of ten \texttt{Prime$^\text{x}$ 13} cameras operating at a 120 Hz frequency and providing a 3D precision of $\pm0.02$ millimeters. The computing unit is a $13^{\text{th}}$ Generation $\text{Intel}^{\text{\textregistered}}$ $\text{Core}^{\text{\texttrademark}}$ i9-13900K processor coupled with 64GB of RAM, that runs the \texttt{Motive} software to analyze and interpret camera data, and performs the computations of the proposed discrete-time REAP. The communication between \texttt{Motive} and MATLAB is done through User Datagram Protocol (UDP) using the \texttt{NatNet} service.


\subsection{Simulation Analysis and Comparison Study}\label{sec:Simulation}

\begin{figure}[!t]
    \centering
    \includegraphics[width=1\linewidth]{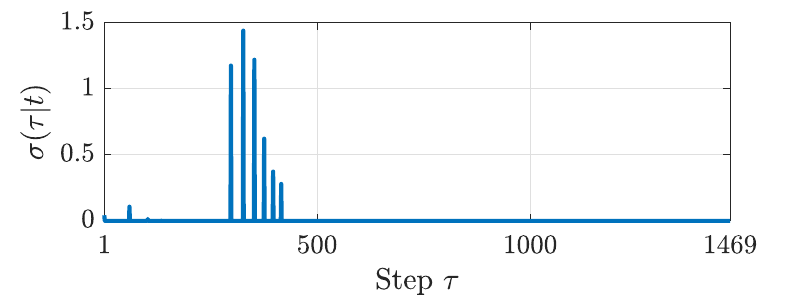}
    \caption{Time profile of the KKT parameter $\sigma(\tau|t)$ with the proposed scheme (i.e., CASE V).}
    \label{fig:sigmaforoneExample}
\end{figure}

Setting the sampling period to 0.2 seconds, the position dynamics of the Parrot Bebop 2 drone can be expressed \cite{amiri2024closed} as a linear system in the form of \eqref{eq:System1}, where $x=[p_x ~ \dot{p}_x ~ p_y ~ \dot{p}_y ~ p_z ~ \dot{p}_z]^\top$ with $p_x,p_y,p_z\in\mathbb{R}$ being  X, Y, and Z positions in the global Cartesian coordinate,  $u=[u_x~u_y~u_z]^\top$  with $u_x,u_y,u_z\in\mathbb{R}$ being control inputs on X, Y, and Z directions, and 
\begin{align*}
&A=\begin{bmatrix}
1 & 0.19895 & 0 & 0 & 0 & 0 \\
0 & 0.98952  & 0 & 0 & 0 & 0 \\
0 & 0 & 1.000 & 0.19963 & 0 & 0 \\
0 & 0 & 0 & 0.99627 & 0 & 0 \\
0 & 0 & 0 & 0 & 1.000 & 0.16816\\
0 & 0 & 0 & 0 & 0 & 0.69946
\end{bmatrix},\\
&B=\begin{bmatrix}
-0.10917348 & 0 & 0  \\
-1.08982035 & 0 & 0  \\
0 & -0.141040918& 0  \\
0 & -1.409531141 & 0  \\
0 & 0 & -0.030967224  \\
0 & 0 & -0.292295416 
\end{bmatrix},\\
&C=\begin{bmatrix}
1 & 0 & 0 & 0 & 0 & 0 \\
0 & 0 & 1 & 0 & 0 & 0 \\
0 & 0 & 0 & 0 & 1 & 0
\end{bmatrix},~~~~~~~~~~~D=\begin{bmatrix}
0 & 0 & 0  \\
0 & 0 & 0 \\
0 & 0 & 0
\end{bmatrix}.
\end{align*}

\begin{figure}[h]
\begin{center}
\includegraphics[width=\linewidth]{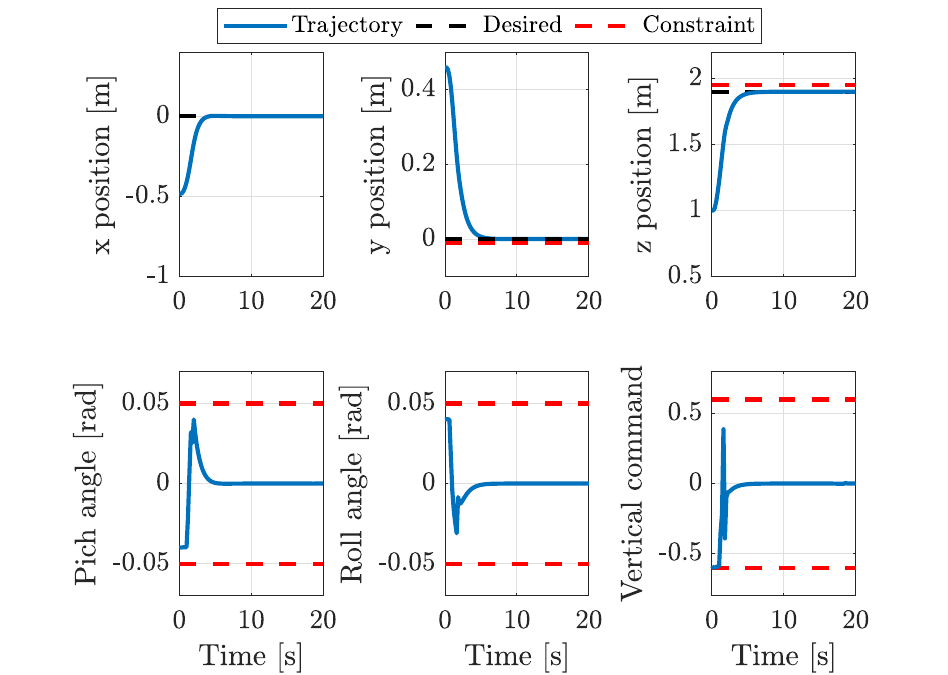}  \\
 \includegraphics[width=1\linewidth]{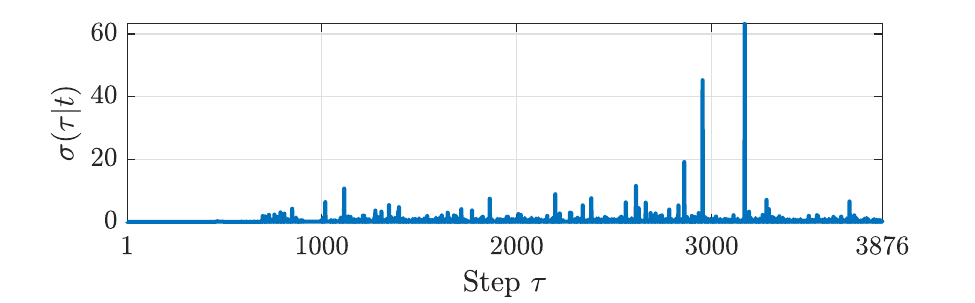}
\caption{Time profile on states and control inputs with the proposed scheme in the presence of constraints on both control inputs and states.  The last row showing the time profile of the KKT parameter $\sigma(\tau|t)$ throughout the entire process.}
\label{fig:WorstCase}
\end{center}
\end{figure}

The control inputs $u_x(t)$, $u_y(t)$, and $u_z(t)$ should satisfy the following constraints for all $t$: $|u_x(t)|\leq0.05$ , $|u_y(t)|\leq0.05$, and $|u_z(t)|\leq0.6$. We set $r = [0~0~0~0~1.5~0]^T$, the weighting matrices $Q_x=\text{diag}\{5,5,5,5,1000,1000\}$ and $Q_u=\text{diag}\{30,20,1\}$, and the length of the prediction horizon $N=10$. Also, the desired reference signal is \( r = \begin{bmatrix} 0 &  0 &  1.5 \end{bmatrix}^T \).

To assess the performance of the proposed discrete-time REAP in guaranteeing constraint satisfaction at all times, we set $d\tau=0.001$ and consider five cases based on the choice of the KKT parameter: i) CASE I, where $\sigma(\tau|t)=2.22\times10^{-16},~\forall t,\tau$ (this case resembles continuous-time implementation of REAP, where $\sigma(\tau|t)\cdot d\tau$ is very small); ii) CASE II, where $\sigma(\tau|t)=0.5,~\forall t,\tau$; iii) CASE III, where $\sigma(\tau|t)=0.05,~\forall t,\tau$; iv) CASE IV, where $\sigma(\tau|t)=0.005,~\forall t,\tau$; and v) CASE V, where $\sigma(\tau|t)$ is set to the upper bound of \eqref{eq:SigmaCondition}. To provide a quantitative comparison, we consider 1,000 experiments with initial condition $x_0=[\alpha_1~0~\alpha_2~0~1.5+\alpha_3~0]^\top$, where in each experiment, $\alpha_1$, $\alpha_2$, and $\alpha_3$ are uniformly selected from the interval [-0.5,0.5]; to ensure a fair comparison, the same initial condition is applied to all five cases.

Table \ref{tab:ExtensiveComparison} reports the percentage of constraint violation for all ceases. As seen in this table, using a fixed KKT parameter causes discretization errors and thus results in constraint violation; the larger the value of the fixed KKP parameter is, the higher the violation percentage will become.

Note that although using a very small KKT parameter (i.e., CASE I, where $\sigma(\tau|t)=2.22\times10^{-16},~\forall t,\tau$) does not lead to constraint violation, it significantly reduces the evolution rate of system \eqref{eq:DisREAP}. Such a reduction in the evolution rate reduces the performance of the MPC scheme to the one of the terminal control law $\kappa(x,r)$ detailed in Subsection \ref{sec:TerminalConstraintSet} which is used to warm start REAP at every time instant (see Remark \ref{Remark:InitialFeasibility}). This point is shown in Figure \ref{fig:oneExample}, where REAP with $\sigma(\tau|t)=2.22\times10^{-16}$ provides a non-smooth control input. The time profile of the KKT parameter $\sigma(\tau|t)$ with the proposed scheme (i.e., CASE V) is illustrated in Figure \ref{fig:sigmaforoneExample}.


\begin{figure}[!t]
\begin{center}
\includegraphics[width=.8\linewidth]{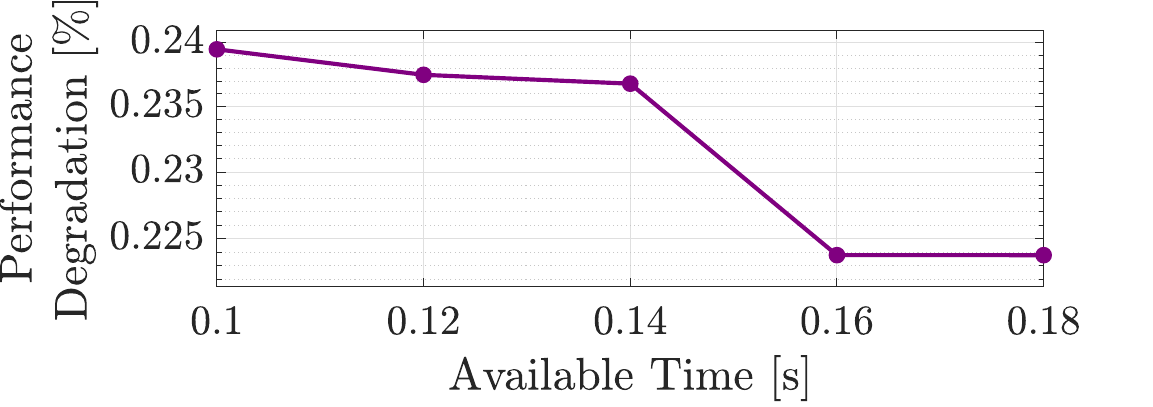}  \\
\caption{Impact of the available time for REAP's execution on the performance degradation.}
\label{fig:Suboptimality}
\end{center}
\end{figure}

\begin{table}[!t]
\caption{Comparison study for the violation}
\centering
\begin{tabular}{c|c}
       \hline
         KKT Parameter &  Violation Percentage [$\%$]\\
       \hline\hline
      CASE I: $\sigma(\tau|t)=2.22\times10^{-16}$  & 0\\
       \hline
      CASE II: $\sigma(\tau|t)=0.5$   & 100\\
        \hline
       CASE III:h $\sigma(\tau|t)=0.05$  &85.2\\
        \hline
       CASE IV: $\sigma(\tau|t)=0.005$   &48.9\\
        \hline
       CASE V: $\sigma(\tau|t)$ as in \eqref{eq:SigmaCondition}  &0\\
        \hline
    \end{tabular}
    \label{tab:ExtensiveComparison}
\end{table}

\begin{figure*}[!t]
\begin{center}
\includegraphics[width=.8\columnwidth]{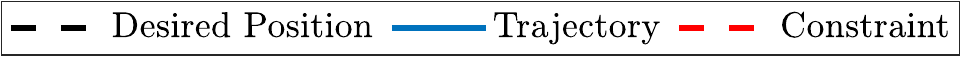}\\
\includegraphics[width=1\linewidth]{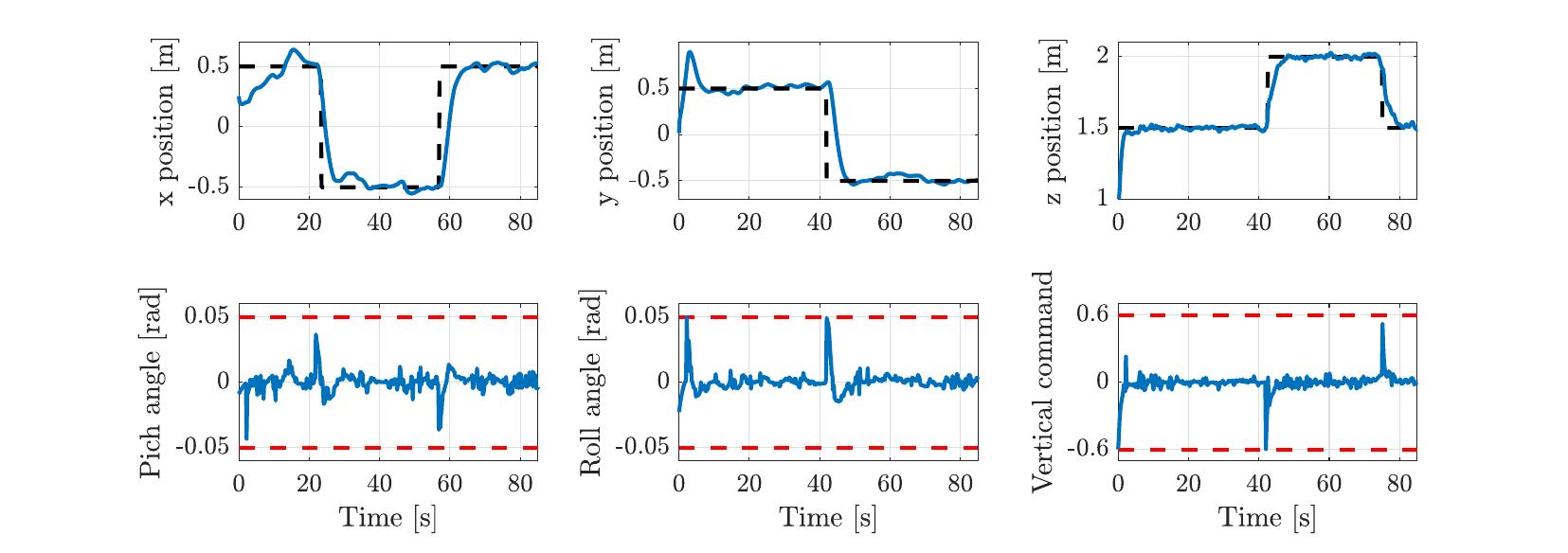} 
\caption{Experimental results\textemdash time-profile of states and control inputs of the Parrot Bebop 2 drone with discrete-time REAP given in \eqref{eq:DisREAP}.}
\label{fig:EXp}
\end{center}
\end{figure*}

\begin{figure}[!t]
    \centering
    \includegraphics[width=1\linewidth]{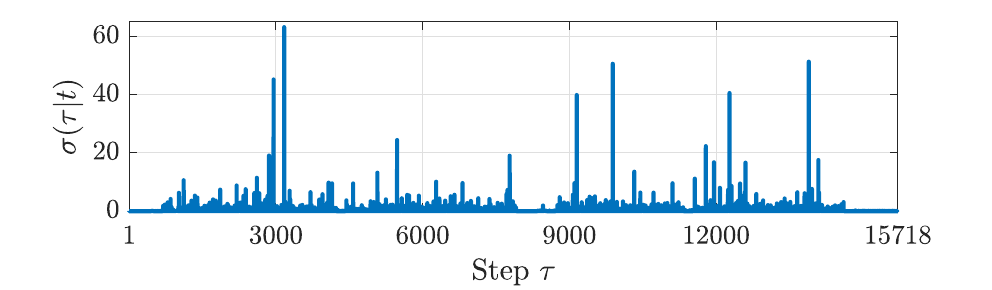}\\
    \includegraphics[width=.45\linewidth]{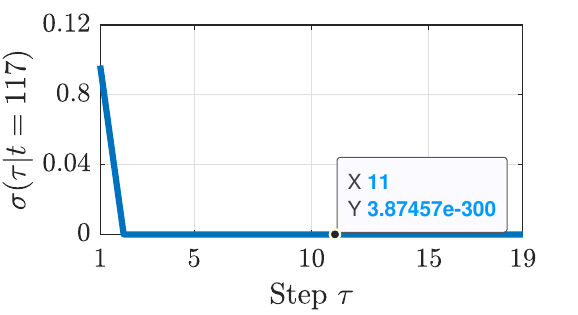}\includegraphics[width=.45\linewidth]{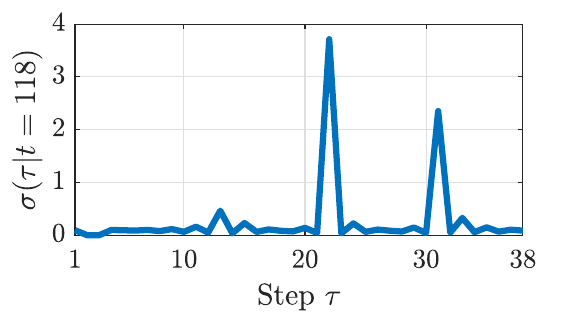}
    \caption{Evolution of the KKT parameter $\sigma(\tau|t)$; top figure shows the evolution across all time instants, while the bottom figures show the evolution within two typical time instants.}
    \label{fig:sigma}
\end{figure}

\subsection{Constraints on Control Inputs and States} 
In this subsection, we evaluate the performance of the proposed scheme in the presence of constraints on both the control inputs and the states. We assume that the control inputs are constrained as \( |u_x(t)| \leq 0.05 \), \( |u_y(t)| \leq 0.05 \), and \( |u_z(t)| \leq 0.6 \) for all \( t \), while the constraints on states are \( y(t) \geq -0.01 \) and \( z(t) \leq 1.95 \). The desired reference signal is \( r = \begin{bmatrix} 0 &  0 &  1.9 \end{bmatrix}^T \). 

Figure \ref{fig:WorstCase} presents the simulation results, with the last row showing the time profile of the KKT parameter $\sigma(\tau|t)$ throughout the entire process. As seen in the figure, the proposed scheme effectively steers the drone to the desired position while satisfying constraints on both control inputs and states.

\subsection{Suboptimality Assessment}
To assess the suboptimality of the proposed scheme, we compare the obtained solution with the optimal solution which can be achieved by running MPC on a more powerful processor. Figure \ref{fig:Suboptimality} illustrates the performance degradation relative to the available time, which is the duration that REAP is executed at each time instant. Note that the performance is calculated as $\text{Performance}=\sum_{t}\left\Vert x(t)-\bar{x}_r\right\Vert_{Q_x}^2+\left\Vert u(t)-\bar{u}_r\right\Vert_{Q_u}^2$. 

As shown in Figure \ref{fig:Suboptimality}, the performance degradation remains below 0.3\%, even when the available time for REAP's execution is half of the sampling period. This demonstrates the effectiveness of the proposed scheme in maintaining acceptable performance despite limitations on available time.

\subsection{Experimental Analysis} 
This subsection aims at experimentally validating the proposed discrete-time REAP by investigating its performance in controlling the position of the Parrot Bebop 2 drone.

Experimental results are presented in Fig. \ref{fig:EXp}. As seen in this figure, the developed discrete-time REAP effectively steers the Parrot Bebop 2 drone to the desired position, while satisfying the constraints at all times. The discrete-time REAP shows sufficient robustness to early termination by executing different numbers of computation steps (with the mean value of 37) depending on the available time at each time instant, with minimum performance degradation as  shown in Figure \ref{fig:EXp}.

Figure \ref{fig:sigma} presents the time-profile of the computed KKT parameter, where the second row highlights the point discussed in remark \ref{remark:nonzero}. Figure \ref{fig:sigma} reveals that, whenever the constraints are far from being violated, the developed discrete-time REAP increases the KKT parameter to speed up the evolution of system \eqref{eq:DisREAP} and consequently improve its convergence. Note that $\sigma(\tau|t)\in[0,63.2137]$ during the experiment, with a mean value of $0.2392$ across all time instants and computation steps.

\section{Conclusion}\label{sec:Conclusion}
REAP is a systematic approach designed to address limited computing capacity for MPC implementations. While prior work provides theoretical guarantees that
are developed in continuous time, REAP must be
implemented in discrete time in real-world applications. This paper investigated the theoretical guarantees associated with discrete-time implementation of REAP. It was analytically shown that by computing updates of the control sequence in discrete time and adaptively adjusting the KKT parameter, one can maintain the REAP's anytime feasibility and convergence properties when its computations are performed in discrete time. The effectiveness of the proposed methodology was evaluated through extensive simulation and experimental studies. This paper also developed a MATLAB package that allows researchers to easily utilize the proposed methodology in their research.

\appendix
\renewcommand\thefigure{A.\arabic{figure}}
\setcounter{figure}{0}


\begin{figure}[b]
    \centering
    \includegraphics[width=0.5\linewidth]{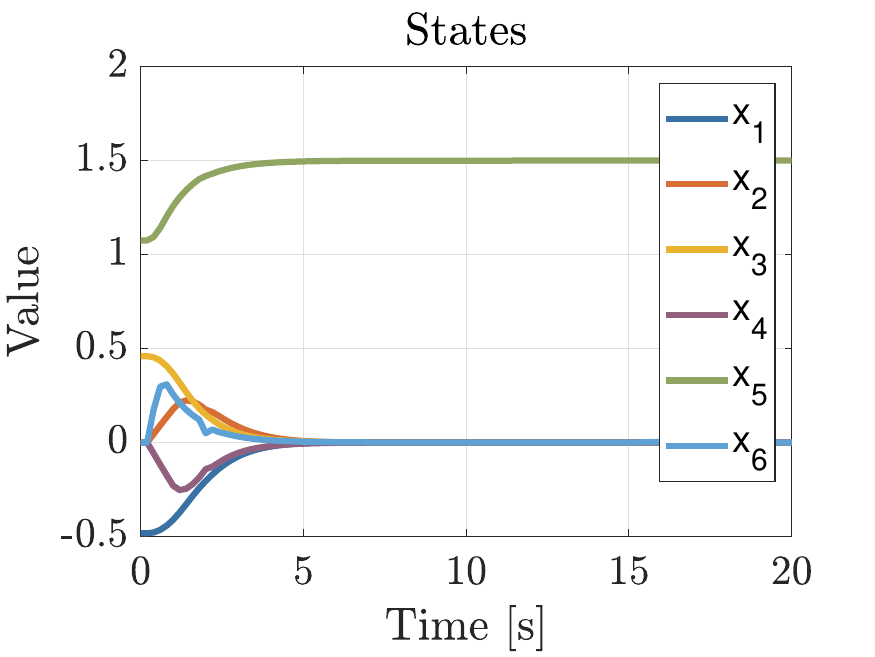}\includegraphics[width=0.5\linewidth]{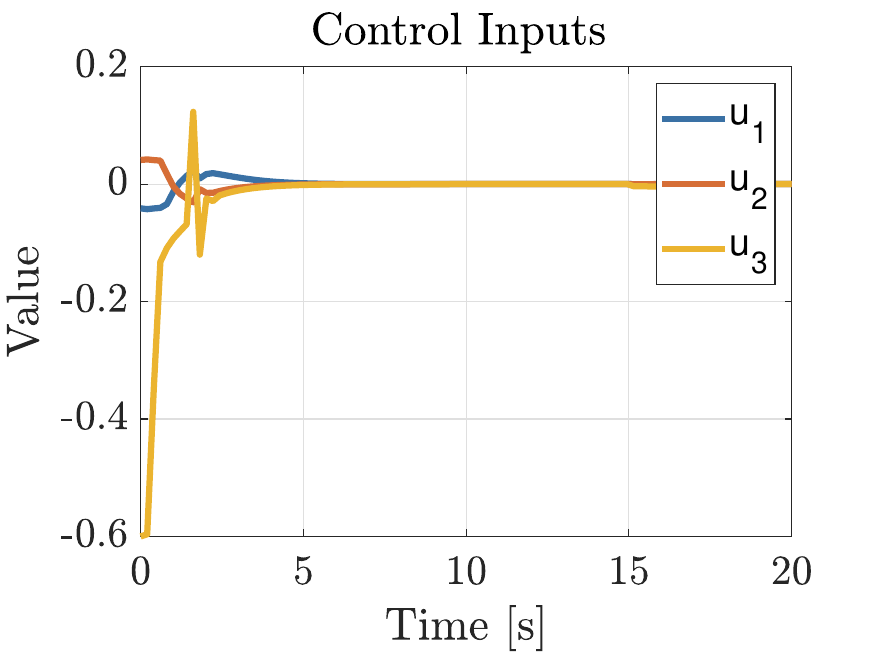}
    \caption{Time-profile of system states and applied control input generated by the ``discreteREAP" package.}
    \label{fig:Output}
\end{figure}

\section{``DiscreteREAP" MATLAB Package}
We have developed the ``DiscreteREAP" MATLAB package to allow researchers to adopt the proposed methodology in their research. This package is available at the following URL: \href{https://github.com/mhsnar/DiscreteREAP.git}{https://github.com/mhsnar/DiscreteREAP.git}.

\medskip
\noindent\textbf{Features:} Key features of the ``DiscreteREAP" package are:
\begin{itemize}
    \item Efficient Control Design: The package streamlines the process of designing control schemes by providing built-in functions for system modeling, controller synthesis, and simulation.

 \item Constraint Handling: Users can easily incorporate constraints in the form of \eqref{eq:allConstraints} into their control designs to address system performance and safety, even in the presence of limitations on computational resources.

 \item Simulation Capabilities: The package enables users to simulate the behavior of their control systems and assess their performance.

 \item Tutorial Support: A detailed tutorial guides users through various functionalities of the package, helping them understand and leverage its capabilities effectively.
\end{itemize}

\medskip
\noindent\textbf{Examples:} 
To facilitate the usability of the package, two examples are provided under the main directory. The first example focuses on controlling the position of the Parrot Bebop 2 drone, which is detailed in Section \ref{sec:Results}. The second example considers a system in the form of \eqref{eq:System1} with the following matrices (this system has been discussed in \cite{Limon2008,SSMPC}):
\begin{align*}\label{eq:NumericalExample}
A= \begin{bmatrix} 1 & 1 \\ 0 & 1 \end{bmatrix},~~~~B = \begin{bmatrix} 0.0 & 0.5 \\ 1.0 & 0.5 \end{bmatrix},~~~~C = \begin{bmatrix} 1 & 0 \end{bmatrix},~~~~D = 0,
\end{align*}
where states and control inputs are constrained as $\left\vert x_i(t)\right\vert\leq5$ and $\left\vert u_i(t)\right\vert\leq10,~i\in\{1,2\}$ for all $t$. In this example, we set $r=4.85$, and $Q_x=\text{diag}\{1,1\}$ and $Q_u=\text{diag}\{1,1\}$. Time-profiles of the states and control inputs are shown in Figure \ref{fig:Output}.





\bibliographystyle{elsarticle-num} 

\bibliography{ref}  


\end{document}